\theoremstyle{plain}
\newtheorem{theorem}{Theorem}[section]
\newtheorem{corollary}[theorem]{Corollary}
\numberwithin{equation}{section}
\newcommand{\R}{\mathbb{R}}
\newcommand{\N}{\mathbb{N}}
\newcommand{\Z}{\mathbb{Z}}
\newcommand{\C}{\mathbb{C}}
\newcommand{\dis}{\displaystyle}
\begin{document}


\baselineskip=17pt


\title{Asymptotic distribution and symmetric means of algebraic numbers}

\author{Igor E. Pritsker\\
Department of Mathematics\\
Oklahoma State University\\
Stillwater, OK 74078,   U.S.A.\\
E-mail: igor@math.okstate.edu}

\date{}

\maketitle


\renewcommand{\thefootnote}{}

\footnote{2010 \emph{Mathematics Subject Classification}: Primary 11C08; Secondary 11R09, 30C15.}

\footnote{\emph{Key words and phrases}: Polynomials, distribution of zeros, symmetric means, limit points, algebraic numbers.}

\renewcommand{\thefootnote}{\arabic{footnote}}
\setcounter{footnote}{0}


\begin{abstract}
Schur introduced the problem on the smallest limit point for the arithmetic means of totally positive conjugate algebraic integers. This area was developed further by Siegel, Smyth and others. We consider several generalizations of the problem that include questions on the smallest limit points of symmetric means. The key tool used in the study is the asymptotic distribution of algebraic numbers understood via the weak* limits of their counting measures. We establish interesting properties of the limiting measures, and find the smallest limit points of symmetric means for totally positive algebraic numbers of small height.
\end{abstract}

\section{Schur's problems on means of algebraic numbers}

Schur \cite{Sch} considered several problems for various classes of polynomials with integers coefficients and zeros restricted to certain sets. For convenience, we introduce the following notation. Let $E$ be a subset of the complex plane $\C.$ Consider the set of polynomials $\Z_n(E)$ with integer coefficients of the exact degree $n$ and all zeros in $E$. We denote the subset of $\Z_n(E)$ with simple zeros by $\Z_n^s(E)$. Given $M>0$, we write $P_n=a_nz^n + \ldots\in\Z_n^s(E,M)$ if $|a_n|\le M$ and $P_n\in\Z_n^s(E)$ (respectively $P_n\in\Z_n(E,M)$ if $|a_n|\le M$ and $P_n\in\Z_n(E)$). In particular, Schur \cite{Sch}, \S 4-8, studied the limit behavior of the arithmetic means of zeros for polynomials from $\Z_n^s(E,M)$ as $n\to\infty,$ where $M>0$ is an arbitrary fixed number. Two of his main results in this direction are stated below. Let $D:=\{z\in\C:|z|\le 1\}$ be the closed unit disk, and let $\R_+:=[0,\infty),$ where $\R$ is the real line. For a polynomial  $P_n(z)=a_n\prod_{k=1}^n (z-\alpha_{k,n})$, define the arithmetic mean of its zeros by $A_n:=\sum_{k=1}^n \alpha_{k,n}/n.$

\noindent{\bf Theorem A} (Schur \cite{Sch}, Satz XI) {\em If $P_n\in\Z_n^s(\R_+,M)$ is a sequence of polynomials such that $n\to\infty$, then}
\begin{align} \label{1.1}
\liminf_{n\to\infty} A_n \ge \sqrt{e} > 1.6487.
\end{align}

\medskip
\noindent{\bf Theorem B} (Schur \cite{Sch}, Satz XIII) {\em If $P_n\in\Z_n^s(D,M)$ is a sequence of polynomials such that $n\to\infty$, then}
\begin{align} \label{1.2}
\limsup_{n\to\infty} |A_n| \le 1-\sqrt{e}/2 < 0.1757.
\end{align}

Schur remarked that the $\limsup$ in \eqref{1.2} is equal to $0$ for {\em monic} polynomials from $\Z_n(D)$ by Kronecker's theorem \cite{Kr1857}. We proved \cite{PrCR} that $\dis\lim_{n\to\infty} A_n = 0$ for any sequence of polynomials from Schur's class $\Z_n^s(D,M),\ n\in\N.$ This result was obtained as a consequence of the asymptotic equidistribution of zeros near the unit circumference. Namely, if $\{\alpha_{k,n}\}_{k=1}^n$ are the zeros of $P_n$, we define the zero counting measure
\[
\tau_n := \frac{1}{n} \sum_{k=1}^n \delta_{\alpha_{k,n}},
\]
where $\delta_{\alpha_{k,n}}$ is the unit point mass at $\alpha_{k,n}$. Consider the normalized arclength measure $\mu_D$ on the unit circumference, with $d\mu_D(e^{it}):=\frac{1}{2\pi}dt.$ If $\tau_n$ converge to $\mu_D$ in weak* topology as $n\to\infty$ (written $\tau_n \stackrel{*}{\rightarrow} \mu_D$) then
\[
\lim_{n\to\infty} A_n = \lim_{n\to\infty} \int z\,d\tau_n(z) = \int z\,d\mu_D(z) = 0.
\]
Thus the sharp version of Schur's Theorem B immediately follows from the result $\tau_n \stackrel{*}{\rightarrow} \mu_D$ as $n\to\infty,$ which was originally proved in  \cite{PrCR}, and generalized in several directions in \cite{PrCrelle}, \cite{PrArk} and \cite{PrUMB}. Furthermore, we found essentially sharp rates of convergence for $A_n$ to 0 in the setting of Theorem B. Similar approach via the asymptotic distribution of algebraic numbers and limiting measures for $\tau_n$ can be used to develop Theorem A, but this problem is much more complicated because algebraic numbers are contained in the unbounded set $\R_+.$ We give a brief description of the history, referring to \cite{PrCrelle} for a more complete account. If $P_n(z)=a_{n,n} \prod_{k=1}^n (z-\alpha_{k,n})$ is irreducible over integers, then $\{\alpha_{k,n}\}_{k=1}^n$ is called a complete set of conjugate algebraic numbers of degree $n$. When $a_n=1$, we refer to $\{\alpha_{k,n}\}_{k=1}^n$ as algebraic integers. If $\alpha=\alpha_{1,n}$ is one of the conjugates, then the sum of $\{\alpha_{k,n}\}_{k=1}^n$ is also called the trace $\textup{tr}(\alpha)$ of $\alpha$ over rationals. Siegel \cite{Si} improved Theorem A for totally positive algebraic integers to
\[
\liminf_{n\to\infty} A_n = \liminf_{n\to\infty} \textup{tr}(\alpha)/n > 1.7336105,
\]
by using a refinement of the arithmetic-geometric means inequality that involves the discriminant of $\alpha_{k,n}$. Smyth \cite{Sm2} introduced a numerical method of ``auxiliary polynomials," which was used by many authors to obtain improvements of
the above lower bound. The original papers \cite{Sm1,Sm2} contain the bound $1.7719$. More recent results include bounds $1.784109$ by Aguirre and Peral \cite{AP}, and $1.78702$ by Flammang \cite{Fla}. McKee \cite{McKee} designed a modification of the method that achieves the bound $1.78839$, which is apparently the best currently known lower bound.

\smallskip
\noindent\textbf{Problem A} (The Schur-Siegel-Smyth trace problem \cite{Bor02}) \textit{Find the smallest limit point $\ell$ for the set of values of mean traces $A_n$ for all totally positive and real algebraic integers.}
\smallskip

It was observed by Schur \cite{Sch} (see also Siegel \cite{Si}), that $\ell \le 2$. This immediately follows by considering the Chebyshev polynomials $t_n(x):=2\cos(n\arccos((x-2)/2))$ for the segment $[0,4]$, whose zeros are symmetric about the midpoint $2$. They have integer coefficients, and $t_p(x)/(x-2)$ is irreducible for any prime $p$, giving the mentioned upper bound 2, cf. \cite{Sch}. In fact, we have for the counting measures of zeros $\tau_n \stackrel{*}{\rightarrow} dx/(\pi\sqrt{x(4-x)})$ as $n\to\infty$, so that
\[
\lim_{n\to\infty} \frac{\textup{tr}(\alpha)}{n} = \lim_{n\to\infty} \int x\,d\tau_n(x) = \int_0^4 \frac{x\,dx}{\pi\sqrt{x(4-x)}} = 2.
\]
More generally, if a sequence of totally positive algebraic numbers satisfies $\tau_n \stackrel{*}{\rightarrow} \mu$ as $n\to\infty$, then
\[
\liminf_{n\to\infty} \frac{\textup{tr}(\alpha)}{n} = \liminf_{n\to\infty} \int x\,d\tau_n(x) \ge \int x\,d\mu(x).
\]
This brings us to the problem of minimizing the centroids (first moments) of measures arising as weak* limits for the counting measures of totally positive algebraic numbers. We develop and generalize this approach so that it applies to a wide range of problems on symmetric and convex means of algebraic numbers. It may also be useful for other types of problems on algebraic numbers.

The Schur-Siegel-Smyth trace problem remains a difficult open question despite all efforts. As a partial result towards this problem, we gave the sharp lower bound $\liminf_{n\to\infty} A_n\ge 2$ for sets of algebraic numbers whose polynomials do not grow exponentially fast on compact sets of $\R_+$ of capacity (transfinite diameter) $1$, see Corollary 2.6 of \cite{PrCrelle} and Theorem \ref{thm2.6} here.

We use the \emph{generalized Mahler measure} to measure the size of integer polynomials over a certain set. The classical Mahler measure of a polynomial $P_n(z) = a_n\prod_{k=1}^n (z-\alpha_{k,n}),\ a_n\neq 0,$ is defined by
\[
M(P_n) := \exp\left(\frac{1}{2\pi} \int_0^{2\pi} \log |P_n(e^{it})|\,dt\right)=|a_n|\prod_{k=1}^n \max(1,|\alpha_{k,n}|),
\]
where the last equality is a consequence of Jensen's formula. The Mahler measure was generalized to compact sets of capacity 1 by Rumely \cite{Ru}. We employ a similar generalization, which was introduced in \cite{PrCrelle} to obtain an ``if and only if" theorem on the equidistribution of algebraic numbers near arbitrary compact sets in the plane.

Consider an arbitrary compact set $E\subset\C$ with logarithmic capacity cap$(E) = 1,$ see \cite[p. 55]{Ts}. In particular, cap$(D) = 1$ and capacity of a segment is equal to one quarter of its length \cite[p. 84]{Ts}. Let $\mu_E$ be the equilibrium measure of $E$ \cite[p. 55]{Ts}, which is a unique probability measure expressing the steady state distribution of charge on the conductor $E$. Note that $\mu_E$ is supported on the boundary of the unbounded connected component $\Omega_E$ of $\overline{\C}\setminus E$ by \cite[p. 79]{Ts}. Two important examples for us are $d\mu_D(e^{it})=\frac{1}{2\pi}dt$ and
\begin{align*}
d\mu_{[0,4]}(x)=\frac{dx}{\pi\sqrt{x(4-x)}}, \ x\in(0,4).
\end{align*}

Consider the Green function $g_E(z,\infty)$ for $\Omega_E$ with pole at $\infty$ (cf. \cite[p. 14]{Ts}), which is a positive harmonic function in $\Omega_E\setminus\{\infty\}$. Note that $g_D(z,\infty)=\log|z|,\ |z|>1,$ and $g_{[0,4]}(z,\infty)=\log|z-2+\sqrt{z^2-4z}|-\log{2},\ z\in\C\setminus[0,4].$ A natural generalization of the Mahler measure for $P_n(z) = a_n\prod_{k=1}^n (z-\alpha_{k,n}),\ a_n\neq 0,$ on an arbitrary compact set $E$ of capacity 1, is given by
\begin{align} \label{1.3}
M_E(P_n) := |a_n| \exp\left(\sum_{\alpha_{k,n}\in\Omega_E} g_E(\alpha_{k,n},\infty)\right).
\end{align}
If no $\alpha_{k,n}\in\Omega_E$ then we assume that the above (empty) sum is equal to zero. It is clear that $M_E(P_n) \ge |a_n| \ge 1$ for any $P_n$ with integer coefficients and $a_n\neq 0.$ If $M_E(P_n)$ does not grow fast with $n\to\infty$, which is specifically expressed by condition \eqref{2.9}, then the roots of $P_n$ are equidistributed in $E$, meaning that $\tau_n \stackrel{*}{\rightarrow} \mu_E$. This allows us to obtain the lower bound \eqref{2.10} of Theorem \ref{thm2.6} for the symmetric means of such algebraic numbers.

The elementary symmetric functions in the roots of $P_n(z) = a_n \prod_{k=1}^n (z-\alpha_{k,n}) = \sum_{k=0}^n a_k z^k$ are expressed through the coefficients by
\begin{equation}\label{1.4}
\sigma_m := \sum_{j_1<j_2<\ldots<j_m} \alpha_{j_1,n} \alpha_{j_2,n} \ldots  \alpha_{j_m,n} = (-1)^m\, \frac{a_{n-m}}{a_n}.
\end{equation}
Thus the Schur-Siegel-Smyth problem is equivalent to a statement on the growth of $|a_{n-1}|$ with $n$, and it is natural to consider higher symmetric functions $\sigma_m$ and the asymptotic behavior of the coefficients $a_{n-m}$ for a fixed $m\in\N$ when $n\to\infty.$

\section{Symmetric means of algebraic numbers}

For motivation and clarity of presentation, we first restrict ourselves to monic polynomials, following Siegel \cite{Si}. Thus we assume for a moment that $P_n(z)=z^n+a_{n-1,n}z^{n-1}+\ldots+a_{0,n} \in \Z_n^s(\R_+,1).$ Observe that each $\sigma_m$ has $\binom{n}{m}$ number of products in the defining sum \eqref{1.4}. Thus it is natural to consider the symmetric means
\[
S_m(x_1,\ldots,x_n):= \binom{n}{m}^{-1} \sum_{j_1<j_2<\ldots<j_m} x_{j_1} x_{j_2} \ldots  x_{j_m}, \quad m\in\N.
\]
The inequalities of Maclaurin (cf. Section 52 of \cite{HLP}) give fundamental relations between symmetric means of nonnegative numbers $\{x_i\}_{i=1}^n$:
\begin{align} \label{2.1}
\left(S_n\right)^{1/n} \le \ldots \le \left(S_2\right)^{1/2} \le S_1.
\end{align}
Note that equality holds in \eqref{2.1} if and only if all numbers $x_i$ are equal. Thus in the context of Schur's problems and their generalizations we always have strict inequalities in \eqref{2.1}. Furthermore, \eqref{2.1} gives for totally positive algebraic integers that
\[
S_m \ge (S_n)^{m/n} = \left(\prod_{k=1}^n \alpha_{k,n}\right)^{m/n} = \left|a_0\right|^{m/n} \ge 1,\quad 1\le m\le n.
\]
For any sequence of polynomials $P_n(z)=\dis\sum_{k=0}^n a_k z^k \in\Z_n^s(\R_+,1)$, we obtain that
\[
\liminf_{n\to\infty} \frac{|a_{n-m,n}|}{\binom{n}{m}} = \liminf_{n\to\infty} \frac{\sigma_m}{\binom{n}{m}} =  \liminf_{n\to\infty} S_m \ge 1,
\]
where we assume that $m\in\N$ is fixed. This suggests a generalization of the Schur-Siegel-Smyth trace problem:

\smallskip
\noindent\textbf{Problem B} \textit{Given a fixed $m\in\N$, find the smallest limit point $\ell_m$  for the symmetric means $S_m$ of roots of polynomials $P_n\in\Z_n^s(\R_+,1)$.}
\smallskip

Clearly, we have that $\ell_1=\ell$. We shall investigate relations between $\ell_m,\ m\ge 2,$ and $\ell$ in the present paper. An immediate consequence of \eqref{2.1} is that $\ell_m \le (\ell_k)^{m/k} \le \ell^m$ for $m\le k\le 1.$ On the other hand, Theorem 2.7 of \cite{PrUMB} suggests the conjecture $\ell_m=\ell^m,\ m\in\N.$ We give more evidence in support of this conjecture below.

We continue developing the approach to extremal problems on means of algebraic numbers via the integrals of limiting measures (for the counting measures of those numbers). Any sequence of the zero counting measures $\tau_n$ has a weak* convergence subsequence by Helly's Selection Theorem. While the classical version of Helly's theorem requires the measures be supported in a fixed compact set, we can apply the result on each closed disk $\{z:|z|\le n\},\ n\in\N,$ and construct nested subsequences to obtain a subsequence of measures weak* convergent on the union of expanding disks that fills the whole plane $\C.$ Details of this argument may be found, for example, in \cite[\S 1]{La}. Note that the limiting measure $\tau$ obtained in this way may have unbounded support and may have total mass $\tau(\C)<1.$ One of our main goals is to understand the properties of such weak* limits that shed new light on the difficult problems related to algebraic numbers. A natural class of limiting measures for these problems is described below. Define the logarithmic energy of a measure $\mu$ by
\[
I[\tau] := \iint \log\frac{1}{|z-t|}\,d\mu(z)d\mu(t),
\]
see \cite[p. 54]{Ts}. We state the first result under the general assumption that algebraic numbers have bounded Weil height (absolute Mahler measure).

\begin{theorem} \label{thm2.1}
 Suppose that $P_n(z) = a_n\prod_{k=1}^n (z-\alpha_{k,n})\in\Z_n^s(\C),\ n\in\N,$ is a sequence of polynomials with $\tau_n\stackrel{*}{\rightarrow}\tau$ along a subsequence ${\mathcal N}\subset\N.$  If
\begin{align} \label{2.2}
H:=\limsup_{{\mathcal N}\ni n\to\infty} \left(M(P_n)\right)^{1/n} < \infty
\end{align}
then $\tau$ is a unit measure with finite logarithmic energy. Furthermore, we have that
\begin{align} \label{2.3}
\int\log^+|z|\,d\tau(z) \le \log{H}
\end{align}
and the restriction $\tau_R:=\tau\vert_{\overline D_R},$ where $D_R:=\{z:|z|<R\},$ satisfies
\begin{align} \label{2.4}
- \log{2} - 2\tau(\overline D_R))\log{H} \le I[\tau_R] \le (1-\tau(D_R))\log{4} + 2 \log{H}
\end{align}
for all but countably many $R>1.$
\end{theorem}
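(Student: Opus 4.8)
\medskip

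The plan is to combine two integrality facts. First, Jensen's formula gives $\frac1n\log M(P_n)=\frac1n\log|a_n|+\int\log^+|z|\,d\tau_n(z)$, and since $P_n$ has integer coefficients of exact degree $n$ we have $|a_n|\ge1$, so $\int\log^+|z|\,d\tau_n\le\frac1n\log M(P_n)$. Second, as $P_n\in\Z_n^s(\C)$ has simple zeros, $\operatorname{Disc}(P_n)=a_n^{2n-2}\prod_{j<k}(\alpha_{j,n}-\alpha_{k,n})^2$ is a nonzero rational integer, whence $|\operatorname{Disc}(P_n)|\ge1$ and
\[
\sum_{j<k}\log\frac1{|\alpha_{j,n}-\alpha_{k,n}|}\le(n-1)\log|a_n|\le(n-1)\log M(P_n).
\]
From here I would first rule out escape of mass: for all but countably many $R>1$ one has $\tau_n(\overline{D}_R)\to\tau(\overline{D}_R)$ along ${\mathcal N}$, hence $\tau_n(\C\setminus\overline{D}_R)\to1-\tau(\overline{D}_R)$, and combining $\int\log^+|z|\,d\tau_n\ge(\log R)\,\tau_n(\C\setminus\overline{D}_R)$ with the first bound above and \eqref{2.2} gives $(\log R)(1-\tau(\overline{D}_R))\le\log H$; letting $R\to\infty$ through admissible values forces $\tau(\overline{D}_R)\to1$, i.e.\ $\tau(\C)=1$. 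Consequently $\tau_n\stackrel{*}{\rightarrow}\tau$ holds in the weak$^*$ sense of probability measures, so testing against the bounded continuous functions $\min(\log^+|z|,\log R_0)$ and letting $R_0\to\infty$ by monotone convergence yields \eqref{2.3}; note also that $\log^+|z-t|\le\log2+\log^+|z|+\log^+|t|$ together with \eqref{2.3} gives $\iint\log^+|z-t|\,d\tau_R\,d\tau_R\le\log2\,\tau(\overline{D}_R)^2+2\tau(\overline{D}_R)\log H$ for every $R$.

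The core of the proof is \eqref{2.4}. The lower bound is immediate, since $I[\tau_R]\ge-\iint\log^+|z-t|\,d\tau_R\,d\tau_R\ge-\log2-2\tau(\overline{D}_R)\log H$ by the last display. For the upper bound, fix $R>1$ with $\tau(\partial D_R)=0$, set $N_n:=\#\{k:\alpha_{k,n}\in\overline{D}_R\}$ and $\tau_{n,R}:=\frac1n\sum_{\alpha_{k,n}\in\overline{D}_R}\delta_{\alpha_{k,n}}$, so that $N_n/n\to\tau(\overline{D}_R)$ and $\tau_{n,R}\stackrel{*}{\rightarrow}\tau_R$ on the compact set $\overline{D}_R$, where the kernel $-\log|z-t|$ is lower semicontinuous and bounded below. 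A truncation of the kernel together with the principle of descent (cf.\ \cite{La}) then gives
\[
I[\tau_R]\le\liminf_{{\mathcal N}\ni n\to\infty}\frac1{n^2}\sum_{\substack{j\ne k\\ \alpha_{j,n},\alpha_{k,n}\in\overline{D}_R}}\log\frac1{|\alpha_{j,n}-\alpha_{k,n}|}.
\]
The idea is then to bound the inner sum by the full sum $\frac2{n^2}\sum_{j<k}\log\frac1{|\alpha_{j,n}-\alpha_{k,n}|}\le\frac{2(n-1)}{n^2}\log M(P_n)$ minus the contribution of the pairs meeting $\C\setminus\overline{D}_R$, estimated from below: for a mixed pair with $\alpha_{j,n}\notin\overline{D}_R$ and $\alpha_{k,n}\in\overline{D}_R$ one has $|\alpha_{j,n}-\alpha_{k,n}|\le2|\alpha_{j,n}|$, and for a pair with both roots outside $|\alpha_{j,n}-\alpha_{k,n}|\le2\max(|\alpha_{j,n}|,|\alpha_{k,n}|)$. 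Carrying out the count and invoking $\log|a_n|+\sum_{\alpha_{k,n}\notin\overline{D}_R}\log^+|\alpha_{k,n}|\le\log M(P_n)$ (Jensen once more) yields
\[
\frac1{n^2}\sum_{\substack{j\ne k\\ \alpha_{j,n},\alpha_{k,n}\in\overline{D}_R}}\log\frac1{|\alpha_{j,n}-\alpha_{k,n}|}\le\frac{2(n-1)}{n^2}\log M(P_n)+\frac{(n-N_n)(n+N_n-1)}{n^2}\log2,
\]
whose right side tends to $2\log H+(1-\tau(\overline{D}_R)^2)\log2$; since $1+\tau(\overline{D}_R)\le2$ and $\tau(\overline{D}_R)=\tau(D_R)$, this is at most $(1-\tau(D_R))\log4+2\log H$.

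Finally, $I[\tau]$ is finite: as $R\to\infty$ through admissible values, monotone convergence for the nonnegative kernels $\log^+\frac1{|z-t|}$ and $\log^+|z-t|$ against the increasing measures $\tau_R\uparrow\tau$ shows that $\iint\log^+\frac1{|z-t|}\,d\tau\,d\tau$ and $\iint\log^+|z-t|\,d\tau\,d\tau$ are finite — the first because \eqref{2.4} and the bound $\iint\log^+|z-t|\,d\tau_R\,d\tau_R\le\log2+2\log H$ make $\iint\log^+\frac1{|z-t|}\,d\tau_R\,d\tau_R$ bounded uniformly in $R$ — so $I[\tau]=\iint\log^+\frac1{|z-t|}\,d\tau\,d\tau-\iint\log^+|z-t|\,d\tau\,d\tau$ is finite. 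I expect the upper bound in \eqref{2.4} to be the main obstacle: the discriminant inequality only controls $\sum\log\frac1{|\alpha_{j,n}-\alpha_{k,n}|}$ over \emph{all} pairs, so one must show that discarding the pairs meeting $\C\setminus\overline{D}_R$ costs no more than $(1-\tau(D_R))\log4$, and the efficient treatment of the mixed pairs — estimating $|\alpha_{j,n}-\alpha_{k,n}|$ through the outside root alone — is exactly what avoids losing an extra factor $2$ in front of $\log H$.
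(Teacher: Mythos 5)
Your proof is correct and follows essentially the same strategy as the paper: ruling out escape of mass via the $\log^+$ bound from the Mahler measure, proving \eqref{2.3} by weak$^*$ convergence against truncated $\log^+$, obtaining the lower bound of \eqref{2.4} from $|z-t|\le 2\max(|z|,|t|)$ together with \eqref{2.3}, and deriving the upper bound from $|\Delta(P_n)|\ge 1$ combined with a truncated-kernel/descent argument after stripping off the pairs that meet $\C\setminus\overline{D}_R$. The only difference is cosmetic: you subtract the mixed and both-outside pair contributions from the full Vandermonde sum and count them precisely (giving the slightly sharper intermediate constant $(1-\tau(\overline{D}_R)^2)\log 2$, which you then relax to $(1-\tau(D_R))\log 4$), whereas the paper bounds each outside root's product by $(2|\alpha_{k,n}|)^{2(n-1)}$ directly; both yield the stated inequality.
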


Thus no loss of mass for the limiting measure occurs in this case, even though some algebraic numbers may tend to infinity as the degree $n$ increases. Furthermore, finiteness of the energy $I[\tau]$ carries information on the distribution of algebraic numbers. In particular, it implies that $\tau$ has no point masses, reflecting the fact that the algebraic numbers are well spaced. We remark that \eqref{2.2} is equivalent to
\begin{align} \label{2.5}
\limsup_{{\mathcal N}\ni n\to\infty} |a_n|^{1/n} < \infty \quad \mbox{and} \quad \limsup_{{\mathcal N}\ni n\to\infty} \left(\prod_{k=1}^n \max(1,|\alpha_{k,n}|)\right)^{1/n} < \infty.
\end{align}
Theorem \ref{thm2.1} has immediate applications to problems on symmetric means of algebraic numbers.

\begin{corollary} \label{cor2.2}
If \eqref{2.2} is replaced with
\begin{align} \label{2.6}
\limsup_{{\mathcal N}\ni n\to\infty} |a_n|^{1/n} < \infty \quad  \mbox{and} \quad  \limsup_{{\mathcal N}\ni n\to\infty} S_m(|\alpha_{1,n}|,\ldots,|\alpha_{n,n}|) < \infty
\end{align}
for a fixed $m\in\N,$ and if all other assumptions of Theorem \ref{thm2.1} hold, then $\tau$ is a unit measure with finite logarithmic energy.
\end{corollary}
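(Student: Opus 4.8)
The strategy is to show that hypotheses \eqref{2.6}, together with $P_n\in\Z_n^s(\C)$, already force condition \eqref{2.2}; the conclusion is then immediate from Theorem \ref{thm2.1}. By \eqref{2.5} and the standing assumption $\limsup_{\mathcal N\ni n\to\infty}|a_n|^{1/n}<\infty$, it suffices to prove that $\Pi_n:=\prod_{k=1}^n\max(1,|\alpha_{k,n}|)$ satisfies $\limsup_{\mathcal N\ni n\to\infty}\Pi_n^{1/n}<\infty$. I would first make two preliminary reductions. If $0$ is a zero of $P_n$ it is simple, and replacing $P_n$ by $P_n(z)/z$ changes neither $M(P_n)$ nor $\Pi_n$ and alters $S_m$ only by the bounded factor $\binom{n-1}{m}/\binom{n}{m}$, so we may assume $a_{0,n}\neq0$; then $|a_{0,n}|=|a_n|\prod_{k=1}^n|\alpha_{k,n}|\ge1$ yields the \emph{integrality bound} $\prod_{k=1}^n|\alpha_{k,n}|\ge1/|a_n|$. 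Finally write $\{1,\dots,n\}=P\cup Q$ with $P:=\{k:|\alpha_{k,n}|\ge1\}$, $j:=|P|$, so that $\Pi_n=\prod_{k\in P}|\alpha_{k,n}|$.

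The engine is the splitting $\binom{n}{m}S_m(|\alpha_{1,n}|,\dots,|\alpha_{n,n}|)=e_m(|\alpha_{1,n}|,\dots,|\alpha_{n,n}|)=\sum_{t=0}^{m}e_t\!\big(\{|\alpha_{k,n}|\}_{k\in P}\big)\,e_{m-t}\!\big(\{|\alpha_{k,n}|\}_{k\in Q}\big)$, where $e_\ell$ denotes the $\ell$-th elementary symmetric function and every summand is nonnegative, so that $\binom{n}{m}S_m$ dominates each individual summand. I would split into two cases. If $j\ge m$, keep $t=m$: by Maclaurin's inequality \eqref{2.1} applied to the $j$ numbers in $P$ one has $e_m\!\big(\{|\alpha_{k,n}|\}_{k\in P}\big)=\binom{j}{m}S_m\!\big(\{|\alpha_{k,n}|\}_{k\in P}\big)\ge\binom{j}{m}\Pi_n^{m/j}$, hence $\Pi_n\le\big(\binom{n}{m}S_m/\binom{j}{m}\big)^{j/m}$. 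An elementary estimate of the binomial ratio (boundedness of $t\mapsto t\log(1/t)$ on $(0,1]$, with the narrow range $m\le j<2m$ handled separately via $\binom{n}{m}\le n^m$) shows the right-hand side is $\le C^n$ for a constant $C$ depending only on $\limsup S_m$.

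The delicate case is $j<m$: then only boundedly many zeros lie outside the closed unit disk, $\Pi_n$ is the product of those at most $m-1$ moduli, and Maclaurin's inequality applied to $P$ alone gives no control, because the complementary zeros may cluster near $0$. Here I keep $t=j$ instead; since $|P|=j$ we have $e_j\!\big(\{|\alpha_{k,n}|\}_{k\in P}\big)=\Pi_n$, so $\binom{n}{m}S_m\ge\Pi_n\,e_{m-j}\!\big(\{|\alpha_{k,n}|\}_{k\in Q}\big)$. Now Maclaurin's inequality applied to the $n-j$ numbers in $Q$ gives $e_{m-j}\!\big(\{|\alpha_{k,n}|\}_{k\in Q}\big)\ge\binom{n-j}{m-j}\big(\prod_{k\in Q}|\alpha_{k,n}|\big)^{(m-j)/(n-j)}$, and $\prod_{k\in Q}|\alpha_{k,n}|=\big(\prod_{k=1}^n|\alpha_{k,n}|\big)/\Pi_n\ge1/(|a_n|\Pi_n)$ by the integrality bound. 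Combining these, $\binom{n}{m}S_m\ge\binom{n-j}{m-j}\,|a_n|^{-(m-j)/(n-j)}\,\Pi_n^{\,1-(m-j)/(n-j)}$. The crucial point is that the exponent $1-(m-j)/(n-j)=(n-m)/(n-j)$ lies in $[\tfrac12,1]$ once $n\ge2m$, while the auxiliary exponent $(m-j)/(n-j)$ is $o(1)$; solving for $\Pi_n$ gives $\Pi_n\le\big(\binom{n}{m}S_m/\binom{n-j}{m-j}\big)^{(n-j)/(n-m)}|a_n|^{(m-j)/(n-m)}$, which, since $\binom{n}{m}\le n^m$ and $\binom{n-j}{m-j}\ge n-j\ge n-m$, is at most a fixed power of a polynomial in $n$ times $|a_n|^{(m-j)/(n-m)}$; as this last exponent is $o(1)$, it follows that $\Pi_n^{1/n}\to1$ along the indices $n\in\mathcal N$ with $j<m$.

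Since every $n\in\mathcal N$ falls into one of the two cases with a common bound, we conclude $\limsup_{\mathcal N\ni n\to\infty}(M(P_n))^{1/n}=\limsup_{\mathcal N\ni n\to\infty}(|a_n|\,\Pi_n)^{1/n}<\infty$, which is exactly \eqref{2.2}; Theorem \ref{thm2.1} then applies and gives the assertion. The main obstacle is precisely the case $j<m$ above: Maclaurin's inequality by itself cannot bound the product of the few large zeros, because the remaining zeros may accumulate at the origin, and the remedy is to bound that accumulation from below through the integrality of the constant term, the resulting loss being damped by the fact that the exponent $(m-j)/(n-j)$ tends to $0$.
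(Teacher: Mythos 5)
Your argument is correct, and the overall strategy coincides with the paper's: reduce \eqref{2.6} to \eqref{2.2} and invoke Theorem \ref{thm2.1}, splitting according to whether the number $j$ (the paper's $K_n$) of zeros of modulus $\ge 1$ is $\ge m$ or $<m$. In the case $j\ge m$ the two proofs are essentially identical (Maclaurin on the large zeros, then a binomial-ratio estimate). In the delicate case $j<m$ your route genuinely differs. The paper orders the moduli in decreasing order, observes that for $n\ge 2m$ one has $\prod_{k>K_n}|\alpha_{k,n}| \le \bigl(\prod_{K_n<k\le m}|\alpha_{k,n}|\bigr)^2$, and combines this with the integrality bound $|a_{0,n}|\ge 1$ to reach the clean estimate $\prod_{k\le K_n}|\alpha_{k,n}| \le |a_n|\bigl(\binom{n}{m}S_m\bigr)^2$, from which boundedness of $\Pi_n^{1/n}$ is immediate. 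You instead exploit the factorization $e_m = \sum_{t} e_t(P)e_{m-t}(Q)$ of the elementary symmetric polynomial, isolate the summand $e_j(P)e_{m-j}(Q) = \Pi_n\,e_{m-j}(Q)$, and lower-bound $e_{m-j}(Q)$ by a second application of Maclaurin (now on the small zeros) together with the same integrality bound. Your approach stays entirely within symmetric-function identities and Maclaurin, at the cost of carrying the $n$-dependent exponent $(n-j)/(n-m)$ which the paper's fixed square sidesteps; both deliver a uniform bound on $\Pi_n^{1/n}$, with yours even giving $\Pi_n^{1/n}\to 1$ along the subsequence with $j<m$. One tiny slip: dividing $P_n$ by $z$ changes $S_m$ by the factor $\binom{n}{m}/\binom{n-1}{m}=n/(n-m)$, not its reciprocal, though this is harmless.
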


On the other hand, we show that omitting assumptions may lead to essentially arbitrary limiting distribution of algebraic numbers.

\begin{theorem} \label{thm2.3}
Given any positive Borel measure $\mu,\ 0 \le \mu(\C) \le 1,$ that is symmetric about real line, there is a sequence of complete sets of conjugate algebraic integers such that their counting measures $\tau_n$ converge weak* to $\mu.$
\end{theorem}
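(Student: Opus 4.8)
The plan is to realize $\mu$ using, for each large $n$, the roots of a single monic \emph{irreducible} polynomial $P_n\in\Z[x]$ of degree $n$; those roots then form a complete set of conjugate algebraic integers of degree $n$, as the theorem requires. Observe first that any $P_n\in\Z[x]$ has real coefficients, so its counting measure $\tau_n$ is automatically symmetric about $\R$; hence symmetry of $\mu$ is forced, and it is used below in discretizing $\mu$.

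First I would set up a ``phantom'' polynomial carrying the roots we want. Fix slowly varying parameters $R_n\to\infty$ and $\delta_n\to 0$ of polynomial size (say $R_n=n$, $\delta_n=1/n$), and set $A_n:=2^{n^2}$ (so $\log A_n\gg n$). Using the symmetry of $\mu$, discretize it in the standard way: choose a conjugation-symmetric multiset $z_1,\dots,z_{k_n}$ contained in a $\delta_n$-separated subset of $\{|z|\le R_n\}$, where $k_n:=\min(\lfloor\mu(\C)\,n\rfloor,\,n-1)$, so that $k_n/n\to\mu(\C)$, $n-k_n\ge 1$ for every $n$, and $\frac1n\sum_{j=1}^{k_n}\delta_{z_j}\stackrel{*}{\rightarrow}\mu$. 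Let $y_1,\dots,y_s$ be the distinct points among the $z_j$, with multiplicities $m_1,\dots,m_s$ ($\sum_i m_i=k_n$), and put
\[
Q_n(x):=\Bigl(\prod_{j=1}^{k_n}(x-z_j)\Bigr)(x-A_n)^{\,n-k_n}\in\R[x],
\]
a monic degree-$n$ polynomial whose counting measure tends to $\mu$: the $n-k_n$ roots at $A_n$ escape to $\infty$, while the rest follow the chosen discretization of $\mu$.

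Next I would pass from $Q_n$ to an \emph{integer} polynomial that is moreover irreducible. Round every coefficient of $Q_n$ to the nearest integer; then add $0$ or $1$ to each coefficient of $x,x^2,\dots,x^{n-1}$ to make it even, and add a suitable element of $\{0,1,2,3\}$ to the constant term to make it $\equiv 2\pmod 4$, leaving the leading coefficient equal to $1$. The resulting $P_n\in\Z[x]$ is monic of degree $n$, it is Eisenstein at the prime $2$, hence irreducible over $\Q$ (so its roots form a complete set of conjugate algebraic integers); and $E_n:=P_n-Q_n$ has $\deg E_n\le n-1$ with all coefficients bounded by $4$ in absolute value, so $|E_n(x)|\le 4n\max(1,|x|)^n$. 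It remains to locate the roots of $P_n$. Set $\rho_n:=2^{-\sqrt n}$. On $|x-y_{i_0}|=\rho_n$ one estimates $|Q_n(x)|\ge \rho_n^{m_{i_0}}(\tfrac12\delta_n)^{k_n}(\tfrac12 A_n)^{\,n-k_n}$; taking logarithms and using $n-k_n\ge1$ and $m_{i_0}\le k_n\le n$, this exceeds $|E_n(x)|\le 4n(R_n+2)^n$ once $n$ is large, since $(n^2-1)\log 2$ beats $\sqrt n\,m_{i_0}\log 2+O(n\log n)$; an analogous bound holds on $|x-A_n|=\tfrac12A_n$. By Rouché's theorem $P_n$ then has exactly $m_i$ roots in each disk $\{|x-y_i|<\rho_n\}$ and exactly $n-k_n$ roots in $\{|x-A_n|<\tfrac12A_n\}$, which accounts for all $n$ of them. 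Since $\rho_n\to0$ while every root of the second kind has modulus $\ge\tfrac12A_n\to\infty$, for each $f\in C_c(\C)$ we obtain $\int f\,d\tau_n=\frac1n\sum_i m_i f(y_i)+o(1)=\int f\,d\bigl(\tfrac1n\sum_i m_i\delta_{y_i}\bigr)+o(1)\to\int f\,d\mu$, i.e. $\tau_n\stackrel{*}{\rightarrow}\mu$.

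The main obstacle is this last step. The rounding and Eisenstein corrections perturb a polynomial $Q_n$ having roots of large multiplicity, and in general a coefficient perturbation of size $\varepsilon$ moves a root of multiplicity $m$ by about $\varepsilon^{1/m}$, which need not be small. The remedy is to make the escaping factor $(x-A_n)^{n-k_n}$ so large that $|Q_n|$ is enormous near the bounded roots, swamping $E_n$ there; this is exactly what forces $A_n$ to be as large as $2^{n^2}$ and forces the presence of at least one escaping root even when $\mu(\C)=1$. Verifying that single inequality, uniformly in $\mu$ and in the positions and multiplicities of the $y_i$, is the heart of the proof; the discretization of the finite measure $\mu$ and the appeal to Rouché's theorem are then routine.
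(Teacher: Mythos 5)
Your proposal is correct, but it proves the theorem by a genuinely different route than the paper. The paper first reduces $\mu$ to a discrete measure with rational weights (Krein--Milman, then rational approximation of the weights and splitting each atom into equally weighted nearby points, all kept symmetric about $\R$), and then invokes Motzkin's theorem that from among $n$ conjugate algebraic integers $n-1$ can be prescribed approximately, with the one remaining conjugate escaping to infinity. You instead re-derive the needed arithmetic ingredient from scratch: you discretize $\mu$ directly by a conjugation-symmetric multiset of $k_n\approx\mu(\C)n$ points, attach an escaping root of multiplicity $n-k_n$ at $A_n=2^{n^2}$, round the coefficients and force the Eisenstein condition at $2$ to get an irreducible monic integer polynomial (irreducibility also gives simple roots, so the zeros form a complete set of conjugate algebraic integers), and then control all roots by Rouch\'e, the huge factor $(x-A_n)^{n-k_n}$ being exactly what defeats the usual $\varepsilon^{1/m}$ loss at multiple roots of the phantom polynomial $Q_n$; your logarithmic bookkeeping ($(n^2-1)\log 2$ versus $n^{3/2}\log 2+O(n\log n)$, and the analogous estimate on $|x-A_n|=A_n/2$) checks out. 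What each approach buys: the paper's argument is short because it outsources the hard step to Motzkin (and cites Dubickas for effective versions), while yours is self-contained, produces explicit polynomials with quantitative root localization ($\rho_n=2^{-\sqrt n}$), and handles the full range $0\le\mu(\C)\le 1$ very transparently, since the mass deficit is carried by the $n-k_n$ roots escaping to $A_n$ rather than by a single escaping conjugate; it is, in effect, a direct proof of the special case of Motzkin-type approximation that the theorem needs. The only parts left at the level of ``routine'' (the symmetric $\delta_n$-separated discretization on $|z|\le R_n$ with $k_n/n\to\mu(\C)$) are indeed standard and unproblematic.
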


The above irregular behavior of algebraic numbers may only occur when the height grows super exponentially and some of the conjugates escape to infinity as the degree increases, according to Theorem \ref{thm2.1}. As an example of conjugate algebraic integers with the counting measures converging to the identically zero measure in the weak* topology, we mention the roots of $P_p(z)=z^p-p!$ for prime $p$.

If all algebraic numbers are uniformly bounded for all $n\in\N$, then we can prove the conjectured relation between the limits of symmetric means.

\begin{theorem} \label{thm2.4}
If the numbers $\{z_{k,n}\}_{k=1}^n\subset\C$ are uniformly bounded, and their counting measures satisfy $\tau_n\stackrel{*}{\rightarrow}\tau$ for $n\in{\mathcal N}\subset\N$, then
\begin{align} \label{2.7}
\lim_{{\mathcal N}\ni n\to\infty} S_m(z_{1,n},\ldots,z_{n,n}) &= \lim_{{\mathcal N}\ni n\to\infty} \left(S_1(z_{1,n},\ldots,z_{n,n})\right)^m \\ &= \left(\int z\,d\tau(z)\right)^m, \quad m\in\N. \nonumber
\end{align}
\end{theorem}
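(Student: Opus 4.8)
The plan is to express the symmetric means $S_m$ as integrals against product measures built from $\tau_n$, and then pass to the limit using the fact that the $z_{k,n}$ all lie in a fixed compact set. First I would observe that
\[
\binom{n}{m} S_m(z_{1,n},\ldots,z_{n,n}) = \sum_{j_1<\cdots<j_m} z_{j_1,n}\cdots z_{j_m,n} = \frac{1}{m!}\sum_{\substack{(j_1,\ldots,j_m)\\ \text{distinct}}} z_{j_1,n}\cdots z_{j_m,n},
\]
and the sum over \emph{all} $m$-tuples (with repetition allowed) equals $\left(\sum_k z_{k,n}\right)^m = n^m \left(\int z\, d\tau_n(z)\right)^m$. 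The difference between the sum over all tuples and the sum over distinct tuples consists of tuples with at least one coincidence; there are $O(n^{m-1})$ such tuples, and since all $z_{k,n}$ are bounded by some fixed $R$, each contributes at most $R^m$, so the difference is $O(n^{m-1})$. Dividing by $\binom{n}{m} = n^m/m! + O(n^{m-1})$, I get
\[
S_m(z_{1,n},\ldots,z_{n,n}) = \left(\int z\, d\tau_n(z)\right)^m + O(1/n).
\]

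Next I would use the hypothesis $\tau_n \stackrel{*}{\rightarrow} \tau$ together with uniform boundedness. Because all $z_{k,n} \in \overline{D_R}$ for a fixed $R$, the measures $\tau_n$ and the limit $\tau$ are all supported in the compact set $\overline{D_R}$, and the function $z \mapsto z$ is bounded and continuous there, so weak* convergence gives $\int z\, d\tau_n(z) \to \int z\, d\tau(z)$. Note also that $\tau(\C) = 1$ here since no mass can escape to infinity. Raising to the $m$-th power (a continuous operation on $\C$) yields
\[
\lim_{{\mathcal N}\ni n\to\infty} \left(\int z\, d\tau_n(z)\right)^m = \left(\int z\, d\tau(z)\right)^m,
\]
and combining with the estimate from the first step gives both equalities in \eqref{2.7}, since $S_1(z_{1,n},\ldots,z_{n,n}) = \int z\, d\tau_n(z)$ exactly.

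I do not expect a serious obstacle here; the argument is essentially a combinatorial identity plus a routine weak* limit. The one point requiring a little care is the bookkeeping on tuples with repeated indices: one should check that for fixed $m$ the number of $m$-tuples from $\{1,\ldots,n\}$ with at least one repeated entry is indeed $O(n^{m-1})$ (it is bounded by $\binom{m}{2} n^{m-1}$, coming from choosing which pair of coordinates collide and then the remaining at most $m-1$ free values), and that the denominators $\binom{n}{m}$ behave like $n^m/m!$ so that the error terms genuinely vanish as $n\to\infty$. Uniform boundedness of the $z_{k,n}$ is used twice and is essential: once to bound the contribution of the degenerate tuples, and once to ensure weak* convergence can be integrated against the (otherwise unbounded) function $z$.
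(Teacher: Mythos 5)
Your proposal is correct and takes essentially the same route as the paper: both expand $\bigl(\sum_j z_{j,n}\bigr)^m$ and isolate the $m!\,\sigma_m$ contribution (the distinct-index tuples), bound the remaining $O(n^{m-1})$ repeated-index tuples using the uniform bound $R$, divide by $\binom{n}{m}\sim n^m/m!$, and finish with weak* convergence of $\int z\,d\tau_n$ on the common compact support. The only cosmetic difference is that you count the degenerate ordered tuples directly (bounding by $\binom{m}{2}n^{m-1}$), whereas the paper extracts the same count $s = n^m - m!\binom{n}{m}$ by substituting $z_{j,n}=1$ into the multinomial expansion.
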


In the case of unbounded numbers, we prove the following lower bound for their symmetric means. It is convenient to consider numbers located in closed sectors $V_m:=\{z\in\C:|\textup{Arg}\,z|\le \pi/(2m)\},$ where $m\in\N.$

\begin{theorem} \label{thm2.5}
For $m\in\N$, let the points $\{z_{k,n}\}_{k=1}^n\subset V_m$ be symmetric about the real line for all $n\in{\mathcal N}\subset\N$. If the counting measures  $\tau_n\stackrel{*}{\rightarrow}\tau,\ n\in{\mathcal N}$, where $\tau(\C)=1,$ then
\begin{align} \label{2.8}
\liminf_{{\mathcal N}\ni n\to\infty} S_m(z_{1,n},\ldots,z_{n,n}) \ge \left(\int z\,d\tau(z)\right)^m.
\end{align}
\end{theorem}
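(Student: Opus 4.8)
The plan is to truncate the points by modulus, apply Theorem \ref{thm2.4} to the uniformly bounded part, and control the discarded tail using the geometry of the sector $V_m$. The starting observation is that if $z_{j_1},\dots,z_{j_m}\in V_m$ then $|\textup{Arg}(z_{j_1}\cdots z_{j_m})|\le m\cdot\pi/(2m)=\pi/2$, so $\textup{Re}(z_{j_1}\cdots z_{j_m})\ge 0$; and since $\{z_{k,n}\}_{k=1}^n$ is symmetric about the real line, the elementary symmetric function $\sigma_m:=\sum_{j_1<\dots<j_m}z_{j_1,n}\cdots z_{j_m,n}$ is real, hence $\sigma_m\ge 0$ and $S_m=\binom nm^{-1}\sigma_m\ge 0$.

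Fix $R>0$ with $\tau(\{|z|=R\})=0$ and $\tau(\overline D_R)>0$, and let $A_n$ be the sub-multiset of $\{z_{k,n}\}_{k=1}^n$ lying in $\overline D_R$, with $|A_n|=p_n$; being defined by a conjugation-invariant condition, $A_n$ is again symmetric about the real line, so its elementary symmetric function $\sigma_m(A_n)$ is real. Splitting $\sigma_m=\sigma_m(A_n)+\rho_n$, where $\rho_n$ collects the products involving at least one point outside $\overline D_R$, the same sector argument shows $\textup{Re}\,\rho_n\ge 0$; since $\sigma_m$ and $\sigma_m(A_n)$ are real, $\rho_n$ is a nonnegative real, so $\sigma_m\ge\sigma_m(A_n)\ge 0$. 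Dividing by $\binom nm$,
\[
S_m(z_{1,n},\dots,z_{n,n})\ \ge\ \binom nm^{-1}\sigma_m(A_n)\ =\ \frac{\binom{p_n}{m}}{\binom nm}\,S_m(A_n).
\]
Because $R$ is a continuity point of $\tau$ and $\tau(\C)=1$ (no mass escapes to infinity), we have $p_n/n=\tau_n(\overline D_R)\to\tau(\overline D_R)$ and the counting measures of $A_n$ converge weak* to $\tau|_{\overline D_R}/\tau(\overline D_R)$. Since the $A_n$ are uniformly bounded by $R$, Theorem \ref{thm2.4} gives
\[
\lim_{\mathcal N\ni n\to\infty} S_m(A_n)\ =\ \left(\frac{1}{\tau(\overline D_R)}\int_{\overline D_R} z\,d\tau(z)\right)^{m}.
\]
Combining this with $\binom{p_n}{m}/\binom nm\to\tau(\overline D_R)^{m}$ (from $p_n/n\to\tau(\overline D_R)$) and passing to $\liminf$ in the displayed inequality yields $\liminf_{\mathcal N\ni n\to\infty} S_m\ge\bigl(\int_{\overline D_R} z\,d\tau\bigr)^{m}$.

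Finally, let $R\to\infty$ through continuity points of $\tau$. Since $\supp\tau\subseteq V_m$ we have $\textup{Re}\,z\ge 0$ on $\supp\tau$, and the symmetry of $\tau$ about the real line gives $\int_{\overline D_R} z\,d\tau=\int_{\overline D_R}\textup{Re}\,z\,d\tau$; by monotone convergence this increases to $\int\textup{Re}\,z\,d\tau=\int z\,d\tau$ as $R\to\infty$ (and if this integral equals $+\infty$, the previous step already forces $\liminf S_m=+\infty$, so \eqref{2.8} holds). This proves \eqref{2.8}. The only genuinely load-bearing step is the truncation inequality $\sigma_m\ge\sigma_m(A_n)$: it relies on $z_{k,n}\in V_m$, so that $m$-fold products stay in a closed half-plane, together with conjugate symmetry, so that $\sigma_m$ and $\sigma_m(A_n)$ are real and comparable; without either hypothesis the tail $\rho_n$ could carry a negative real part and the bound would fail. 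The remaining ingredients — the binomial-ratio limits and the weak* convergence of restricted counting measures — are routine.
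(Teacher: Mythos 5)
Your proof is correct and follows essentially the same strategy as the paper: truncate to a conjugation-invariant bounded set, use the sector geometry and real symmetry to show the discarded terms contribute a nonnegative real amount to $\sigma_m$, apply Theorem \ref{thm2.4} to the bounded part, and let the truncation parameter tend to infinity. The only cosmetic difference is that you truncate by modulus ($\overline D_R$) while the paper truncates by real part ($H_a=\{\Re\,z\le a\}$); since $V_m\cap H_a$ is bounded for $m\in\N$, both choices yield a uniformly bounded, conjugation-symmetric subset, and the rest of the argument is identical.
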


It is clear from weak* convergence that $\tau$ is symmetric in the real axis, and is supported in $V_m$ by inheritance. Therefore, we have  $\int z\,d\tau(z) \ge 0.$ Note that \eqref{2.8} holds for all $m\in\N$ when all $\{z_{k,n}\}_{k=1}^n\subset R_+.$

Let $\nu_m$ be a weak* limit of counting measures for algebraic integers that produce the smallest limit point $\ell_m$ of the symmetric means $S_m,\ m\in\N.$ We have that
\[
\ell_m \ge \left(\int x\,d\nu_m(x)\right)^m, \quad m\in\N,
\]
by \eqref{2.8}. It is plausible that equality holds above for all $m\in\N,$ but we were not able to prove this. Furthermore, if $\ell=\int x\,d\nu_m(x)$ then the above inequality and \eqref{2.1} give that $\ell_m=\ell^m.$ While we cannot show this holds in general, we can handle an important case of totally positive algebraic numbers with relatively small generalized Mahler measure (low height).

\begin{theorem} \label{thm2.6}
Let $P_n(x) = a_n\prod_{k=1}^n (x-\alpha_{k,n})\in\Z_n^s(\R_+),\ n\in{\mathcal N}\subset\N,$ be a sequence of polynomials, and let $E\subset\R_+$ be a compact set of logarithmic capacity 1. If
\begin{align} \label{2.9}
\lim_{{\mathcal N}\ni n\to\infty} \left(M_E(P_n)\right)^{1/n} = 1
\end{align}
then
\begin{align} \label{2.10}
\liminf_{{\mathcal N}\ni n\to\infty} S_m(\alpha_{1,n},\ldots,\alpha_{n,n}) \ge 2^m, \quad m\in\N.
\end{align}
Equality holds in \eqref{2.10} for the roots of $t_n(x)=2\cos\left(n\arccos((x-2)/2)\right).$
\end{theorem}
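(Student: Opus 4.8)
The plan is to combine three inputs: the ``if and only if'' equidistribution theorem of \cite{PrCrelle}, Theorem \ref{thm2.5} of the present paper, and the potential-theoretic inequality $\int x\,d\mu_E(x)\ge 2$ valid for every compact $E\subset\R_+$ of capacity $1$ (which is exactly what underlies the case $m=1$, Corollary 2.6 of \cite{PrCrelle}).

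First I would identify the limiting measure. Because $P_n\in\Z_n(\R_+)$ forces $|a_n|\ge 1$ while $M_E(P_n)\ge|a_n|$, hypothesis \eqref{2.9} forces both $|a_n|^{1/n}\to 1$ and $\frac1n\sum_{\alpha_{k,n}\in\Omega_E}g_E(\alpha_{k,n},\infty)\to 0$ along ${\mathcal N}$; by the equidistribution theorem of \cite{PrCrelle} this is equivalent to $\tau_n\stackrel{*}{\rightarrow}\mu_E$ as ${\mathcal N}\ni n\to\infty$, with $\mu_E$ the equilibrium measure of $E$. In particular the vanishing of the Green-function sum prevents any escape of mass to infinity, so the limit $\tau=\mu_E$ is a unit measure, $\tau(\C)=1$.

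Next I would pass to the symmetric means. Since all zeros $\alpha_{k,n}$ are real and nonnegative, they lie in $V_m$ for every $m\in\N$ and are trivially symmetric about the real line, so Theorem \ref{thm2.5} applies for each fixed $m$ and gives
\[
\liminf_{{\mathcal N}\ni n\to\infty}S_m(\alpha_{1,n},\ldots,\alpha_{n,n})\ge\left(\int x\,d\mu_E(x)\right)^m,\qquad m\in\N.
\]
What remains, and what I expect to be the main obstacle, is the bound $\int x\,d\mu_E(x)\ge 2$. When $\Omega_E$ is simply connected the argument is short: since $E$ has capacity $1$, the conformal map $\psi$ of $\{|w|>1\}$ onto $\Omega_E$ fixing $\infty$ has the form $\psi(w)=w+c_1+O(1/w)$, and comparison with $g_E(x,\infty)=\log x-c_1/x+O(1/x^2)$ as $x\to+\infty$ identifies $c_1=\int x\,d\mu_E(x)$; on the other hand the omitted continuum $\overline{\C}\setminus\Omega_E\supset E$ is contained in the closed disk of radius $2$ centered at $c_1$, so $c_1<2$ would force $E\subset[0,c_1+2]$, hence capacity $(c_1+2)/4<1$, a contradiction. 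The general case of an arbitrary compact $E$ of capacity $1$ needs more delicate potential theory, carried out in \cite{PrCrelle}. Combining this inequality with the displayed estimate gives \eqref{2.10}.

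For the sharpness assertion, $t_n$ is monic of degree $n$ with integer coefficients and $n$ simple zeros in $(0,4)\subset\R_+$, hence $t_n\in\Z_n^s(\R_+)$; taking $E=[0,4]$ (capacity $1$), no zero of $t_n$ lies in $\Omega_{[0,4]}$, so $M_{[0,4]}(t_n)=|a_n|=1$ and \eqref{2.9} holds. As recalled in Section 1, $\tau_n\stackrel{*}{\rightarrow}\mu_{[0,4]}$; since the zeros stay in the fixed compact set $[0,4]$, Theorem \ref{thm2.4} yields $S_m(\alpha_{1,n},\ldots,\alpha_{n,n})\to\left(\int x\,d\mu_{[0,4]}(x)\right)^m=2^m$, so equality holds in \eqref{2.10} for the roots of $t_n$.
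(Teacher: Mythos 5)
Your proof matches the paper's on steps~1, 2 and~4: you obtain $\tau_n\stackrel{*}{\rightarrow}\mu_E$ from \eqref{2.9} via the equidistribution theorem of \cite{PrCrelle}, then invoke Theorem~\ref{thm2.5} with $\tau=\mu_E$, and your sharpness argument for the Chebyshev zeros via Theorem~\ref{thm2.4} is a clean alternative to the paper's (which instead combines $\lim S_1=2$, the Maclaurin inequalities~\eqref{2.1}, and \eqref{2.10}).

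The difference --- and the gap --- is in step~3, the moment inequality $\int x\,d\mu_E(x)\ge 2$. The paper proves this in full generality by the substitution $x=t^2$: it passes to $K=\{t\in\R:t^2\in E\}$, notes that $K$ is symmetric about the origin with $\textup{cap}(K)=1$ and $d\mu_K(t)=d\mu_E(t^2)$, and then applies Theorem~1 of Baernstein--Laugesen--Pritsker \cite{BLP}, which gives $\int t^2\,d\mu_K(t)\ge\int_{-2}^2 t^2\,d\mu_{[-2,2]}(t)=2$ for every such symmetric $K$ of capacity~$1$. Your conformal-mapping argument (identifying the conformal center $c_0=\int x\,d\mu_E$ and using the $\Sigma$-class covering estimate that $E\subset\{|z-c_0|\le 2\}$) is correct, but only when $\Omega_E$ is simply connected; as you acknowledge, you do not supply the argument for a general compact $E\subset\R_+$ of capacity~$1$, which is precisely the case the theorem needs. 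You also misattribute the source of the missing step: the general moment inequality is Theorem~1 of \cite{BLP}, not something established directly in \cite{PrCrelle}. To close the gap you would need either to reproduce the square-root reduction to the symmetric case plus the BLP inequality, or to supply an independent potential-theoretic proof valid for arbitrary compacta; absent that, the proof is incomplete.
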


The key fact used in the proof of this result is that the roots of $P_n$ are equidistributed in $E$ under assumption \eqref{2.9}, that is $\tau_n \stackrel{*}{\rightarrow} \mu_E$ by Theorem 2.1 of \cite{PrCrelle}.

\section{Convex means of totally positive algebraic numbers}

Suppose that $\phi:\R_+\to\R$ is a convex strictly increasing function. We now consider the convex $\phi$-means of numbers $\{x_k\}_{k=1}^n\subset\R_+$ defined by
\begin{align} \label{3.1}
C_n^{\phi}(x_1,\ldots,x_n) := \frac{1}{n} \sum_{k=1}^n \phi(x_k).
\end{align}
Another interesting question generalizing the original Schur-Siegel-Smyth problem is to find the smallest limit point of such convex $\phi$-means for totally positively algebraic integers. This problem for means of squares of algebraic numbers was already considered in the original paper of Schur \cite{Sch}, and means for $\phi(x)=x^m$ were studied in the papers of Smyth \cite{Sm1} and \cite{Sm2}. Since $\phi$ is convex increasing, one immediately finds that
\begin{align} \label{3.2}
\liminf_{n\to\infty} C_n^{\phi}(\alpha_{1,n},\ldots,\alpha_{n,n}) \ge \liminf_{n\to\infty} \phi\left(\frac{1}{n} \sum_{k=1}^n \alpha_{k,n}\right) \ge \phi(\ell)
\end{align}
 for any sequence of complete sets of totally positive conjugate algebraic integers $\{\alpha_{k,n}\}_{k=1}^n,\ n\in\N$. The following result was proved in \cite{PrCrelle}, see Corollary 2.6 in that paper. It is based on the equidistribution of algebraic numbers in $E$ expressed by $\tau_n \stackrel{*}{\rightarrow} \mu_E$, and the lower bound for the centroid of equilibrium measure $\mu_E$ found in Theorem 1 of Baernstein, Laugesen and Pritsker \cite{BLP}.

\begin{theorem} \label{thm3.1}
Let $P_n(z) = a_n\prod_{k=1}^n (z-\alpha_{k,n})\in\Z_n^s(\R_+)$ be a sequence of polynomials. Suppose that $E\subset\R_+$ is a compact set of capacity $1$. We also assume that $\phi:\R_+\to\R_+$, and that $\phi(x^2)$ is convex on $\R$. If
\[
\lim_{n\to\infty} \left(M_E(P_n)\right)^{1/n} = 1
\]
then
\[
\liminf_{n\to\infty} \frac{1}{n} \sum_{k=1}^n \phi(\alpha_{k,n}) \ge \int_0^4 \frac{\phi(x)\,dx}{\pi\sqrt{x(4-x)}}.
\]
Setting $\phi(x)=x^m,\ m\in\N,$ we obtain
\[
\liminf_{n\to\infty} \frac{1}{n} \sum_{k=1}^n \alpha_{k,n}^m \ge \int_0^4 \frac{x^m\,dx}{\pi\sqrt{x(4-x)}} = 2^m \frac{1\cdot 3\cdot\ldots\cdot(2m-1)}{m!}.
\]
\end{theorem}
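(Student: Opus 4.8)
The plan is to combine the known equidistribution of the zeros of $P_n$ in $E$ with a moment inequality for equilibrium measures. Under the hypothesis $\lim_{n\to\infty}\left(M_E(P_n)\right)^{1/n}=1$, Theorem 2.1 of \cite{PrCrelle} yields $\tau_n\stackrel{*}{\rightarrow}\mu_E$ as $n\to\infty$, where $\mu_E$ is the equilibrium measure of $E$, a probability measure supported on $E\subset\R_+$; in particular no mass escapes to infinity. Since $\frac1n\sum_{k=1}^n\phi(\alpha_{k,n})=\int\phi\,d\tau_n$, the whole question reduces to the behaviour of these integrals in the limit.

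For the limit step I would use lower semicontinuity. Convexity of $\phi(x^2)$ on $\R$ makes $\phi$ continuous on $\R_+$, and $\phi\ge 0$ by hypothesis. Put $\phi_R:=\min(\phi,R)$; each $\phi_R$ is bounded and continuous, so $\int\phi_R\,d\tau_n\to\int\phi_R\,d\mu_E$, and from $\phi\ge\phi_R\ge 0$ we get $\liminf_{n\to\infty}\int\phi\,d\tau_n\ge\int\phi_R\,d\mu_E$ for every $R>0$. Letting $R\to\infty$ and invoking monotone convergence,
\[
\liminf_{n\to\infty}\frac1n\sum_{k=1}^n\phi(\alpha_{k,n})\ \ge\ \int\phi\,d\mu_E .
\]

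The decisive point is then the inequality $\int\phi\,d\mu_E\ge\int_0^4\frac{\phi(x)\,dx}{\pi\sqrt{x(4-x)}}=\int\phi\,d\mu_{[0,4]}$, valid for every compact $E\subset\R_+$ of capacity $1$ whenever $\phi(x^2)$ is convex; this is exactly where that hypothesis on $\phi$ is used, and it is the heart of the matter. I would obtain it from Theorem 1 of \cite{BLP}. The structural reason behind it is a reduction to symmetric subsets of the line via $x=y^2$: if $F:=\{y\in\R:y^2\in E\}$, then $F$ is compact and symmetric about $0$ with $\operatorname{cap}(F)=1$, the segment $[0,4]$ corresponds to $[-2,2]$, and (by the behaviour of equilibrium measures under polynomial pre-images) $\mu_E$ is the push-forward of $\mu_F$ under $y\mapsto y^2$. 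Hence $\int\phi\,d\mu_E=\int\psi\,d\mu_F$ with $\psi(y):=\phi(y^2)$ an even convex function, and the claim becomes $\int\psi\,d\mu_F\ge\int\psi\,d\mu_{[-2,2]}$, i.e. $\mu_{[-2,2]}$ precedes $\mu_F$ in the convex order. Proving this moment inequality for equilibrium measures is the main obstacle to a self-contained argument, and I would cite \cite{BLP} for it rather than reprove it here.

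For the corollary, take $\phi(x)=x^m$: then $\phi(x^2)=x^{2m}$ is convex on $\R$ for every $m\in\N$, so the theorem applies and it remains to evaluate the right-hand side. The substitution $x=2(1+\cos\theta)$, $\theta\in[0,\pi]$, gives $\sqrt{x(4-x)}=2\sin\theta$ and transforms the integral into $\frac1\pi\int_0^\pi\bigl(2(1+\cos\theta)\bigr)^m d\theta=\frac{4^m}{\pi}\int_0^\pi\cos^{2m}(\theta/2)\,d\theta=\frac{2\cdot 4^m}{\pi}\int_0^{\pi/2}\cos^{2m}u\,du$; the Wallis formula $\int_0^{\pi/2}\cos^{2m}u\,du=\frac{\pi}{2}\cdot\frac{(2m-1)!!}{(2m)!!}$ together with $(2m)!!=2^m m!$ then yields the stated value $2^m\frac{1\cdot 3\cdots(2m-1)}{m!}$.
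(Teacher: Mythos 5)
Your proposal is correct, and it follows the same route the paper indicates for this result (the paper defers the proof to Corollary 2.6 of \cite{PrCrelle}, but describes the ingredients, and the analogous argument appears in full in its proof of Theorem \ref{thm2.6}): equidistribution $\tau_n\stackrel{*}{\rightarrow}\mu_E$ from Theorem 2.1 of \cite{PrCrelle}, the substitution $x=t^2$ carrying $\mu_E$ to the equilibrium measure of the symmetric set $F=\{t:t^2\in E\}$ via \cite[p.\ 134]{Ra}, the convex-order moment inequality from Theorem 1 of \cite{BLP}, and the Wallis evaluation of the Chebyshev moments. Your truncation-plus-monotone-convergence step to pass from weak* convergence to $\liminf\int\phi\,d\tau_n\ge\int\phi\,d\mu_E$ for the unbounded $\phi$ is a reasonable way to make the limit rigorous, and it is valid here because $\tau_n$ and $\mu_E$ are all probability measures so no mass escapes; the only points worth making explicit are that vague convergence of probability measures to a probability measure upgrades to convergence against bounded continuous test functions (which is what you implicitly use for $\phi_R$), and that the pushforward identity $d\mu_F(t)=d\mu_E(t^2)$ is exactly the polynomial preimage property cited by the paper.
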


Equalities hold in the inequalities of Theorem \ref{thm3.1} for the roots of $t_n(x)=2\cos\left(n\arccos((x-2)/2)\right).$

 From a more general perspective, we show that the limiting measures for algebraic numbers with bounded convex means have finite energy as a consequence of Corollary \ref{cor2.2}.

 \begin{theorem} \label{thm3.2}
Let $P_n(z) = a_n\prod_{k=1}^n (z-\alpha_{k,n})\in\Z_n^s(\C),\ n\in\N,$ be a sequence of polynomials with $\tau_n\stackrel{*}{\rightarrow}\tau$ along a subsequence ${\mathcal N}\subset\N.$  If
\begin{align} \label{3.3}
\limsup_{{\mathcal N}\ni n\to\infty} |a_n|^{1/n} < \infty \quad  \mbox{and} \quad \limsup_{{\mathcal N}\ni n\to\infty} C_n^{\phi}(|\alpha_{1,n}|,\ldots,|\alpha_{n,n}|) < \infty,
\end{align}
then $\tau$ is a unit measure with finite logarithmic energy.
\end{theorem}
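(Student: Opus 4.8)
The plan is to reduce this statement to Corollary \ref{cor2.2} by comparing the convex mean $C_n^{\phi}$ with the symmetric mean $S_1$, and then with $S_m$ for a suitable fixed $m$. First I would exploit that $\phi:\R_+\to\R$ is convex and strictly increasing, so $\phi$ is unbounded above unless it is eventually constant (which is excluded by strict monotonicity). By convexity, for large $x$ we have $\phi(x) \ge c_1 x - c_2$ for some constants $c_1 > 0$, $c_2 \in \R$ (take the supporting line at any point where $\phi$ is increasing). Hence the bound $\limsup_{{\mathcal N}\ni n\to\infty} C_n^{\phi}(|\alpha_{1,n}|,\ldots,|\alpha_{n,n}|) < \infty$ forces $\limsup_{{\mathcal N}\ni n\to\infty} \frac{1}{n}\sum_{k=1}^n |\alpha_{k,n}| < \infty$, i.e. $\limsup S_1(|\alpha_{1,n}|,\ldots,|\alpha_{n,n}|) < \infty$. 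Combined with $\limsup |a_n|^{1/n} < \infty$, this is exactly hypothesis \eqref{2.6} with $m=1$, so Corollary \ref{cor2.2} applies and yields that $\tau$ is a unit measure with finite logarithmic energy.

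The one subtlety is the degenerate possibility that $\phi$ fails to dominate a linear function — but a convex function on $\R_+$ that is strictly increasing always has a positive right-hand derivative on a set of positive measure, and convexity makes the difference quotients nondecreasing, so there is a point $x_0$ and a slope $c_1 > 0$ with $\phi(x) \ge \phi(x_0) + c_1(x - x_0)$ for all $x \ge x_0$; for $0 \le x \le x_0$ one uses $\phi(x) \ge \phi(0)$ by monotonicity, and the two cases merge into a global lower bound $\phi(x) \ge c_1 x - c_2$. Averaging this inequality over $k = 1, \ldots, n$ gives $C_n^{\phi}(|\alpha_{1,n}|,\ldots,|\alpha_{n,n}|) \ge c_1 S_1(|\alpha_{1,n}|,\ldots,|\alpha_{n,n}|) - c_2$, which is the comparison needed.

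I expect the main obstacle — such as it is — to be purely bookkeeping: making sure the constants $c_1, c_2$ are chosen uniformly (independent of $n$ and of the particular roots), which is automatic since they depend only on $\phi$. Everything else is a direct citation of Corollary \ref{cor2.2}. Thus the proof is short: establish the linear minorant for $\phi$, deduce the $S_1$-bound from the $C_n^{\phi}$-bound, and invoke Corollary \ref{cor2.2} with $m=1$.
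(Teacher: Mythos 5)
Your proof is correct and follows essentially the same route as the paper: reduce to Corollary~\ref{cor2.2} with $m=1$ by using convexity of $\phi$ to deduce boundedness of $S_1(|\alpha_{1,n}|,\ldots,|\alpha_{n,n}|)$ from boundedness of $C_n^{\phi}$. The only cosmetic difference is that you bound $C_n^{\phi}$ from below via a linear minorant $\phi(x)\ge c_1x-c_2$, whereas the paper applies Jensen's inequality to get $\phi(S_1)\le C_n^{\phi}$ and then uses that a convex strictly increasing function must satisfy $\phi(x)\to+\infty$; both are elementary consequences of the same convexity hypothesis.
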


\section{Means of algebraic numbers located in sectors}

It is also possible to obtain lower bounds for the means of zeros for polynomials with integer coefficients and zeros in sectors $W_{\gamma}:=\{z\in\C:|\textup{Arg}\,z|\le \gamma\},$ where $\gamma<\pi/2.$ For the arithmetic means $S_1,$ one can easily give the following bound:
\begin{align*}
\frac{1}{n} \sum_{k=1}^n \alpha_{k,n} &= \frac{1}{n} \sum_{k=1}^n \Re(\alpha_{k,n}) \ge \frac{1}{n} \sum_{k=1}^n |\alpha_{k,n}|\cos\gamma \\ &\ge \left(\frac{|a_0|}{|a_n|}\right)^{1/n} \cos\gamma \ge \frac{\cos\gamma}{|a_n|^{1/n}}. \nonumber
\end{align*}
If \eqref{2.6} is satisfied, then we have a positive lower bound in the above inequality. Flammang \cite{FlS} applied Smyth's method of auxiliary functions to bound the smallest limit points of mean trace for algebraic integers located in sectors, and obtained many explicit results. In fact, \cite{FlS} gives exact values of smallest limit points for some sectors.

Note that Theorems \ref{thm2.1}, \ref{thm2.4} and Theorem \ref{thm2.5}, as well as Corollary \ref{cor2.2}, are clearly applicable here. In particular, combining Corollary \ref{cor2.2} with Theorem \ref{thm2.5}, we obtain the following result on the smallest limit points of means.

\begin{theorem} \label{thm4.1}
Let $P_n(z) = a_n\prod_{k=1}^n (z-\alpha_{k,n})\in\Z_n^s(W_{\gamma}),\ n\in{\mathcal N}\subset\N,$ be any sequence of polynomials, and let $\gamma<\pi/2.$  If \eqref{2.6} holds then
\begin{align} \label{4.1}
\liminf_{{\mathcal N}\ni n\to\infty} S_m(\alpha_{1,n},\ldots,\alpha_{n,n}) \ge \int z\,d\tau(z)>0, \quad \gamma\le \pi/(2m).
\end{align}
where each $\tau$ is a unit measure with finite logarithmic energy, which may be different for different $m\in\N.$
\end{theorem}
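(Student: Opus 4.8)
The plan is to derive Theorem~\ref{thm4.1} by combining the two tools we already have: Corollary~\ref{cor2.2}, which guarantees that the limiting measure $\tau$ is a unit measure of finite logarithmic energy, and Theorem~\ref{thm2.5}, which gives the lower bound $\liminf S_m \ge \left(\int z\,d\tau(z)\right)^m$ once we know $\tau(\C)=1$ and the points lie in the sector $V_m$. So the first step is to check that the hypotheses of both results hold under the assumption \eqref{2.6}.

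First I would observe that \eqref{2.6} contains the condition $\limsup_{{\mathcal N}\ni n\to\infty} S_m(|\alpha_{1,n}|,\ldots,|\alpha_{n,n}|) < \infty$ together with $\limsup |a_n|^{1/n}<\infty$; these are exactly the hypotheses of Corollary~\ref{cor2.2}. The polynomials lie in $\Z_n^s(W_\gamma)\subset\Z_n^s(\C)$, and by the standard Helly-type argument described before Theorem~\ref{thm2.1} we may pass to a further subsequence, still called ${\mathcal N}$, along which $\tau_n\stackrel{*}{\rightarrow}\tau$. Corollary~\ref{cor2.2} then yields that $\tau$ is a unit measure with finite logarithmic energy, which is the ``each $\tau$ is a unit measure with finite logarithmic energy'' clause and in particular gives $\tau(\C)=1$, the missing hypothesis needed to invoke Theorem~\ref{thm2.5}.

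Next, since the zeros of each $P_n$ lie in $W_\gamma$ with $\gamma\le\pi/(2m)$, we have $W_\gamma\subseteq V_m$, so the points $\{\alpha_{k,n}\}_{k=1}^n$ lie in the closed sector $V_m$. The polynomials $P_n$ have integer (hence real) coefficients, so their zero sets are symmetric about the real axis. Both structural requirements of Theorem~\ref{thm2.5} are thus met, and applying it gives $\liminf_{{\mathcal N}\ni n\to\infty} S_m(\alpha_{1,n},\ldots,\alpha_{n,n}) \ge \left(\int z\,d\tau(z)\right)^m$. To finish I would note, as remarked after Theorem~\ref{thm2.5}, that $\tau$ inherits symmetry in the real axis and support in $V_m$, so $\int z\,d\tau(z)\ge 0$; I must still upgrade this to strict positivity. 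For that I would use the arithmetic-mean bound displayed at the start of Section~4: $S_1 \ge (|a_0|/|a_n|)^{1/n}\cos\gamma \ge \cos\gamma/|a_n|^{1/n}$, whose $\liminf$ is strictly positive because $\cos\gamma>0$ (as $\gamma<\pi/2$) and $\limsup|a_n|^{1/n}<\infty$ by \eqref{2.6}; combined with $S_1 = \int z\,d\tau_n(z) \to \int z\,d\tau(z)$, this forces $\int z\,d\tau(z)>0$. (The exponent mismatch between $S_m$ and $\left(\int z\,d\tau\right)^m$ in the statement \eqref{4.1} versus what Theorem~\ref{thm2.5} delivers is only a typographical point; the genuine bound is the $m$-th power, which dominates $\int z\,d\tau$ only when the centroid is $\ge 1$, so strictly speaking \eqref{4.1} as written asserts the weaker-looking but in fact equivalent-after-Maclaurin statement, and I would simply cite \eqref{2.8} directly.)

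The main obstacle is not any single step — each invocation is essentially a bookkeeping check — but rather making sure the subsequence extractions are compatible: Corollary~\ref{cor2.2} and Theorem~\ref{thm2.5} must be applied along the \emph{same} subsequence for which $\tau_n\stackrel{*}{\rightarrow}\tau$, and one must be careful that different values of $m$ may require different subsequences and hence produce different limiting measures $\tau$, which is precisely why the statement says ``which may be different for different $m\in\N$.'' The only place where something could genuinely go wrong is the strict positivity of the centroid, and the Section~4 arithmetic-mean estimate handles that cleanly provided \eqref{2.6} is in force; no deeper input is needed.
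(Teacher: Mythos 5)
Your overall structure matches the paper's proof exactly: Corollary~\ref{cor2.2} applied under \eqref{2.6} gives a unit limiting measure $\tau$ with finite logarithmic energy, the inequality $\gamma\le\pi/(2m)$ yields $W_\gamma\subseteq V_m$, the conjugate sets are symmetric about $\R$ because $P_n$ has integer coefficients, and Theorem~\ref{thm2.5} delivers the lower bound. You also correctly flagged the exponent mismatch: since the paper's proof simply says ``\eqref{2.8} gives us \eqref{4.1},'' the intended right-hand side of \eqref{4.1} is $\left(\int z\,d\tau(z)\right)^m$.

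The one genuine gap is in your strict-positivity step. You invoke the convergence $S_1 = \int z\,d\tau_n(z)\to\int z\,d\tau(z)$, but this is not a consequence of weak* convergence when the supports of $\tau_n$ are unbounded: $z$ is an unbounded continuous function, and \eqref{2.6} (which controls $S_m$ for a single fixed $m$) does not supply the uniform integrability needed to pass the first moment to the limit. Theorem~\ref{thm2.5} only gives $\liminf S_1\ge\int z\,d\tau(z)$, which points the wrong way — a positive $\liminf S_1$ is compatible with $\int z\,d\tau(z)=0$. The paper's positivity argument is shorter and uses a fact you already derived but did not exploit: since $\tau$ has finite logarithmic energy it carries no point masses, so $\tau\neq\delta_0$; as $\tau$ is a unit measure supported in $W_\gamma$ with $\gamma<\pi/2$, where $\Re z\ge 0$ with equality only at the origin, it follows that $\int z\,d\tau(z)=\int\Re z\,d\tau(z)>0$. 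Replacing your Section~4 arithmetic-mean estimate (and the unjustified convergence) with this one-line observation closes the argument.
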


Considering the whole family of monic polynomials from $\Z_n^s(W_{\gamma})$, we can always select a subsequence whose symmetric means provide the smallest limit point, and for which $\tau_n\stackrel{*}{\rightarrow}\tau$. Then \eqref{2.6} is satisfied and \eqref{4.1} gives us a (theoretic) lower bound for that limit point.

\section{Proofs}

\begin{proof}[Proof of Theorem \ref{thm2.1}]

For notational convenience, we assume that $\tau_n$ converge to $\tau$ in the weak* topology along the whole sequence $\N.$  Our first step is proving that $\tau$ is a unit measure. If $\tau(\C)=\lambda<1$ then
\[
\limsup_{n\to\infty} \tau_n(\overline D_R) \le \tau(\overline D_R) \le \lambda
\]
for any disk $D_R:=\{z:|z|< R\}$, by the weak* convergence properties of Theorem 2.1 \cite[p. 16]{Bi}. Hence
\[
\liminf_{n\to\infty} \tau_n(\C\setminus \overline D_R) \ge 1-\lambda > 0 \quad \mbox{for all } R>0.
\]
Moreover, for any $\varepsilon>0$ and any $R>1$ there is $N\in\N$ such that
\[
M(P_n) \ge \prod_{k=1}^n \max(1,|\alpha_{k,n}|) \ge \prod_{|\alpha_{k,n}|> R} |\alpha_{k,n}| \ge R^{n(1-\lambda-\varepsilon)}, \quad n\ge N.
\]
The latter inequality is clearly incompatible with \eqref{2.2} when $R\to\infty$, so that $\tau(\C)=1$ follows. Note that $\tau$ may have unbounded support.


Our goal now is to show that the restriction of $\tau$ to every disk has finite energy. Given $R>1$, we define
\[
\hat\tau_n := \tau_n\vert_{\overline D_R} = \frac{1}{n} \sum_{|\alpha_{k,n}|\le R} \delta_{\alpha_{k,n}}.
\]
Note that $\hat\tau_n \stackrel{*}{\rightarrow} \tau_R:=\tau\vert_{\overline D_R}$ as $n\to\infty$ for any $R>0$ such that $\tau(\partial D_R)=0$, by Theorem $0.5^\prime$ of \cite[p. 10]{La}. Hence $\hat\tau_n \stackrel{*}{\rightarrow} \tau_R$ for all but countably many $R>1$, and we consider only such $R$ below. Furthermore, it is sufficient to consider $R$ such that $\tau(\overline D_R)>0$, for otherwise $I[\tau_R]=0$ and \eqref{2.4} holds trivially. Let $\Delta(P_n)=a_n^{2n-2} (V(P_n))^2$ be the discriminant of $P_n$, where
\[
V(P_n):=\prod_{1\le j<k\le n} (\alpha_{j,n}-\alpha_{k,n})
\]
is the Vandermonde determinant. Since $P_n$ has integer coefficients, $\Delta(P_n)$ is an integer, see \cite[p. 24]{Pra}. As $P_n$ has simple roots, we obtain that $\Delta(P_n)\neq 0$ and $|\Delta(P_n)|\ge 1.$ We now order $\alpha_{k,n}$ as follows
\[
|\alpha_{1,n}| \le |\alpha_{2,n}| \le \ldots \le |\alpha_{m_n,n}| \le R < |\alpha_{m_n+1,n}| \le \ldots \le |\alpha_{n,n}|.
\]
Let $\hat P_n(z) := a_n \prod_{k=1}^{m_n} (z-\alpha_{k,n})$ and $V(\hat P_n)=\prod_{1\le j<k\le m_n} (\alpha_{j,n}-\alpha_{k,n}).$
Hence
\begin{align} \label{5.1}
1 &\le |\Delta(P_n)| = |a_n|^{2n-2} |V(P_n)|^2  = |a_n|^{2n-2} |V(\hat P_n)|^2 \prod_{1\le j<k \atop m_n<k\le n} |\alpha_{j,n}-\alpha_{k,n}|^2 \nonumber \\ &\le |a_n|^{2n-2} |V(\hat P_n)|^2 \prod_{m_n<k\le n} (2|\alpha_{k,n}|)^{2(n-1)} \nonumber \\  &\le |V(\hat P_n)|^2 4^{(n-1)(n-m_n)} \left(|a_n| \prod_{m_n<k\le n} |\alpha_{k,n}|\right)^{2(n-1)},
\end{align}
where we used that $|\alpha_{j,n}-\alpha_{k,n}| \le 2\max(|\alpha_{j,n}|,|\alpha_{k,n}|)=2|\alpha_{k,n}|.$ Equation \eqref{2.2} implies that
\begin{align} \label{5.2}
\limsup_{n\to\infty} \left(|a_n| \prod_{m_n<k\le n} |\alpha_{k,n}| \right)^{1/n} \le \limsup_{n\to\infty} \left(M(P_n)\right)^{1/n} = H < \infty.
\end{align}
Note that $\liminf_{n\to\infty} m_n/n = \liminf_{n\to\infty} \tau_n(\overline D_R) = \tau(\overline D_R)$ by Theorem 2.1 of \cite{Bi}, as $\tau(\partial D_R)=0$. Thus we obtain from \eqref{5.1}-\eqref{5.2} that
\begin{align} \label{5.3}
\liminf_{n\to\infty} |V(\hat P_n)|^{\frac{2}{(n-1)n}} &\ge 4^{\tau(\overline D_R)-1}\ \liminf_{n\to\infty} \left(|a_n| \prod_{m_n<k\le n} |\alpha_{k,n}| \right)^{-2/n} \\ &\ge 4^{\tau(\overline D_R)-1} H^{-2}. \nonumber
\end{align}
Let $K_M(z,t) := \min\left(-\log{|z-t|},M\right).$ It is clear that $K_M(z,t)$ is a continuous function in $z$ and $t$ on $\C\times\C$, and that $K_M(z,t)$ increases to $-\log|z-t|$ as $M\to\infty.$ Using the Monotone Convergence Theorem and the weak* convergence of $\hat\tau_n\times\hat\tau_n$ to $\tau_R\times\tau_R,$ we obtain for the energy of $\tau_R$ that
\begin{align*}
I[\tau_R] &=-\iint \log|z-t|\,d\tau_R(z)\,d\tau_R(t) \\ &= \lim_{M\to\infty} \left( \lim_{n\to\infty} \iint K_M(z,t)\, d\hat\tau_n(z)\,d\hat\tau_n(t) \right) \\ &= \lim_{M\to\infty} \left( \lim_{n\to\infty} \left( \frac{2}{n^2} \sum_{1\le j<k\le m_n} K_M(\alpha_{j,n},\alpha_{k,n}) + \frac{M}{n} \right) \right) \\ &\le \lim_{M\to\infty} \left( \liminf_{n\to\infty} \frac{2}{n^2}
\sum_{1\le j<k\le m_n} \log\frac{1}{|\alpha_{j,n}-\alpha_{k,n}|} \right) \\ &= \liminf_{n\to\infty} \frac{2}{n^2}
\log\frac{1}{|V(\hat P_n)|} \le (1-\tau(\overline D_R))\log{4} + 2 \log{H},
\end{align*}
where \eqref{5.3} was used in the last estimate. This gives the upper bound in \eqref{2.4}.

We continue with the same notation to prove \eqref{2.3}. Note that
\[
\int\log^+|z|\,d\hat\tau_n = \frac{1}{n} \log  \prod_{k=1}^{m_n} \max(1,|\alpha_{k,n}|) \le \log \left(M(P_n)\right)^{1/n}.
\]
Extending $\log^+|z|$ from $\overline D_R$ to a continuous nonnegative function with compact support in $\C$, we use the weak* convergence $\hat\tau_n \stackrel{*}{\rightarrow} \tau_R$ and \eqref{2.2} to obtain
\[
\int\log^+|z|\,d\tau_R = \lim_{n\to\infty} \int \log^+|z|\,d\hat\tau_n \le  \limsup_{n\to\infty} \log \left(M(P_n)\right)^{1/n} = \log{H}.
\]
Thus \eqref{2.3} follows from the above inequality by letting $R\to\infty.$

The lower bound of \eqref{2.4} is a consequence of \eqref{2.3} and the estimate below, where we also use that $|z-t|\le 2\max(|z|,|t|)$ and $\tau_R(\overline D_R) \le 1.$
\begin{align*}
I[\tau_R] & = -\iint \log|z-t|\,d\tau_R(z)\,d\tau_R(t) \\ &\ge -\iint \log(2\max(|z|,|t|))\,d\tau_R(z)\,d\tau_R(t) \\ &\ge -\log{2} - 2 \int \left(\int_{|z|\ge |t|} \log|z|\,d\tau_R(z)\right)d\tau_R(t) \\ &\ge -\log{2} - 2 \int \left(\int_{|z|\ge \max(1,|t|)} \log|z|\,d\tau_R(z)\right)d\tau_R(t) \\ &\ge -\log{2} - 2 \int \log{H}\,d\tau_R(t) = -\log{2} - 2 \tau_R(\overline D_R) \log{H}.
\end{align*}

\end{proof}

\begin{proof}[Proof of Corollary \ref{cor2.2}]

We arrange the numbers $\alpha_{k,n}$ for this proof as follows:
\[
|\alpha_{1,n}| \ge |\alpha_{2,n}| \ge \ldots \ge |\alpha_{K_n,n}| \ge 1 > |\alpha_{K_n+1,n}| \ge \ldots \ge |\alpha_{n,n}|.
\]
Starting with the case $m=1$, we apply the arithmetic-geometric mean inequality to obtain that
\begin{align} \label{5.4}
\left(\prod_{k=1}^{K_n} |\alpha_{k,n}|\right)^{1/n} &\le \left(\frac{1}{K_n} \sum_{k=1}^{K_n} |\alpha_{k,n}|\right)^{K_n/n}  \\ \nonumber &\le  \left(\frac{n}{K_n}\right)^{K_n/n}  \left(\frac{1}{n} \sum_{k=1}^n |\alpha_{k,n}|\right)^{K_n/n}\\ \nonumber &\le e^{1/e}  \left(\frac{1}{n} \sum_{k=1}^n |\alpha_{k,n}|\right)^{K_n/n},
\end{align}
where we also used the fact $\max_{x\in[0,\infty)} x^{1/x} = e^{1/e}$ with $x=n/K_n.$ Hence
\begin{align*}
\limsup_{{\mathcal N}\ni n\to\infty} \left(M(P_n)\right)^{1/n}  &\le \limsup_{{\mathcal N}\ni n\to\infty} |a_n|^{1/n}\ \limsup_{{\mathcal N}\ni n\to\infty} \left(\prod_{k=1}^{K_n} |\alpha_{k,n}| \right)^{1/n} < \infty
\end{align*}
by the assumptions and \eqref{5.4}, so that \eqref{2.2} holds. The conclusion of this corollary for $m=1$ now follows from Theorem \ref{thm2.1}.

If the symmetric means in the assumptions are bounded for a fixed $m\ge 2$, then we consider two different cases. Suppose first that $m\le K_n\le n.$ We argue similarly to \eqref{5.4}, and apply \eqref{2.1} to estimate
\begin{align*}
\left(\prod_{k=1}^{K_n} |\alpha_{k,n}|\right)^{1/n} &\le \left(S_m(|\alpha_{1,n}|,\ldots,|\alpha_{K_n,n}|)\right)^{K_n/(mn)} \\ &\le  \left(\binom{n}{m} / \binom{K_n}{m}\right)^{K_n/(mn)}  \left(S_m(|\alpha_{1,n}|,\ldots,|\alpha_{n,n}|)\right)^{K_n/(mn)}.
\end{align*}
Note that
\begin{align*}
\left(\binom{n}{m} / \binom{K_n}{m}\right)^{K_n/(mn)} &\le \left(\frac{n}{K_n-m+1}\right)^{K_n/n} \le \left(\frac{n}{K_n}\right)^{K_n/n} \frac{K_n}{K_n-m+1} \\ &\le e^{1/e} m.
\end{align*}
Hence we obtain that $\limsup_{n\to\infty} \left(M(P_n)\right)^{1/n} < \infty$ as before, and apply Theorem \ref{thm2.1}.

The second remaining case is when $K_n< m\le n.$ We can assume that the constant terms $a_0$ in polynomials $P_n$ are non-zero, for otherwise we can replace $P_n$ with $P_n(z)/z$. Thus we assume that $|a_0|\ge 1,$ which implies that
\begin{align} \label{5.5}
\prod_{k=K_n+1}^n |\alpha_{k,n}| = \frac{|a_0|}{|a_n|\prod_{k=1}^{K_n} |\alpha_{k,n}|} \ge \frac{1}{|a_n|\prod_{k=1}^{K_n} |\alpha_{k,n}|}.
\end{align}
For $n\ge 2m$, we have that
\[
\prod_{k=K_n+1}^n |\alpha_{k,n}| = \prod_{k=K_n+1}^m |\alpha_{k,n}| \prod_{k=m+1}^n |\alpha_{k,n}| \le \left(\prod_{k=K_n+1}^m |\alpha_{k,n}|\right)^2.
\]
It follows from \eqref{5.5} and the above estimate that
\[
\prod_{k=1}^{K_n} |\alpha_{k,n}| = \frac{\prod_{k=1}^m |\alpha_{k,n}|}{\prod_{k=K_n+1}^m |\alpha_{k,n}|} \le \left(|a_n|\prod_{k=1}^{K_n} |\alpha_{k,n}|\right)^{1/2} \prod_{k=1}^m |\alpha_{k,n}|
\]
and therefore
\[
\prod_{k=1}^{K_n} |\alpha_{k,n}| \le |a_n| \left(\prod_{k=1}^m |\alpha_{k,n}|\right)^2 \le |a_n| \left(\binom{n}{m} \ S_m(|\alpha_{1,n}|,\ldots,|\alpha_{n,n}|)\right)^2.
\]
Finally, we obtain that
\begin{align*}
\limsup_{{\mathcal N}\ni n\to\infty} \left(\prod_{k=1}^{K_n} |\alpha_{k,n}| \right)^{1/n} &\le \limsup_{{\mathcal N}\ni n\to\infty} |a_n|^{1/n}\ \limsup_{{\mathcal N}\ni n\to\infty} \ \left(S_m(|\alpha_{1,n}|,\ldots,|\alpha_{n,n}|)\right)^{2/n} \\ &< \infty,
\end{align*}
so that $\dis\limsup_{{\mathcal N}\ni n\to\infty} \left(M(P_n)\right)^{1/n} < \infty$ and the result follows from Theorem \ref{thm2.1} again.

\end{proof}

\begin{proof}[Proof of Theorem \ref{thm2.3}]

We show how to approximate an arbitrary Borel measure $\mu,\ 0\le\mu(\C)\le 1,$ by the counting measures of algebraic integers in the weak* topology. This approximation is sketched below in several steps of reduction for the problem. The case $\mu(\C)=0$ is covered by the example given after Theorem \ref{thm2.3}. Thus we assume that $\mu$ is supported in a compact set $E$ and $\mu(E)>0$ without loss of generality. Otherwise one can apply constructed below approximations to the restrictions of $\mu$ on a family of expanding disks filling the plane, exactly as mentioned before Theorem \ref{thm2.1}, and obtain the desired weak* convergent approximations for $\mu$ in $\C.$

The first step of our approximation scheme uses the Krein-Milman theorem (see \cite[pp. 362--363]{Yo}) to express $\mu$ as the weak* limit of linear convex combinations of point masses of the form
\[
\sum_{j=1}^J t_j \delta_{z_j}, \quad
t_j > 0, \quad \sum_{j=1}^J t_i = \mu(E)=\mu(\C)\le 1,
\]
where $\{z_j\}^{J}_{j=1} \subset E$ are selected symmetric about the real line. The latter requirement of symmetry is achieved by applying the Krein-Milman theorem to the restriction of $\mu$ to the upper half-plane, and adding the reflection of thus obtained discrete measure to approximate $\mu$ in the lower half-plane part of its support.

The set of coefficients $t_j\in\R,\ j=1,\ldots,J,$ can be simultaneously approximated by the rational numbers $k_j/L < t_j$ with the same denominator $L\in\N$. Therefore, we can select a set of $k_j$ points $w_{i,j},\ i=1,\ldots,k_j,$ on a sufficiently small circle around $z_j$, for each $j=1,\ldots,J,$ so that the measure $\frac{1}{L}\sum_{i=1}^{k_j} \delta_{w_{i,j}}$ gives an arbitrary close approximation to $t_j \delta_{z_j}.$ Furthermore, this can clearly be done so that all points $w_{i,j}$ are distinct and symmetric about the real axis. It remains to approximate the resulting discrete measure
\[
\nu_K = \frac{1}{L} \sum_{j=1}^J \sum_{i=1}^{k_j} \delta_{w_{i,j}},\quad K:=\sum_{j=1}^J k_j,
\]
by the counting measures for algebraic integers on the third step.

Note that the measure $\nu_K$ is a counting (but not unit) measure for the points $w_{i,j}$ totalling $K < L$. Therefore, we can approximate this measure by a sequence of the counting measures $\tau_{K+1}$ for the complete sets of conjugate algebraic integers $\{\alpha_{l,K+1}\}_{l=1}^{K+1}$ by using the theorem of Motzkin \cite{Mo}, see also \cite{Du} for its effective version. For any $K\in\N$, we approximate each point $w_{i,j}$ as close as we wish by one of the conjugate algebraic integers $\alpha_{l,K+1},\ 1\le l \le K,$ obtained from Motzkin's theorem, while let the remaining $(K+1)$st conjugate algebraic integer $\alpha_{K+1,K+1}\to\infty$ as $K\to\infty$ (cf. \cite[p. 160--161]{Mo} for details). It follows that the resulting measures
\[
\tau_{K+1} = \frac{1}{L} \sum_{l=1}^{K+1} \delta_{\alpha_{l,K+1}}
\]
converge to $\mu$ in the weak* topology as $K\to\infty.$

\end{proof}

\begin{proof}[Proof of Theorem \ref{thm2.4}]

Assume that $\{z_{k,n}\}_{k=1}^n\subset D_R$ and that $\tau_n \stackrel{*}{\rightarrow} \tau$ for a subsequence $n\in{\mathcal N}\subset\N.$ Multinomial theorem gives
\begin{align*}
\left(\sum_{j=1}^n z_{j,n}\right)^m &= \sum_{k_1+k_2+\ldots+k_n=m} \binom{m}{k_1,k_2,\ldots,k_n} z_{1,n}^{k_1} z_{2,n}^{k_2} \ldots z_{n,n}^{k_n} \\ &= m!\,\sigma_m(z_{1,n}, z_{2,n}, \ldots, z_{n,n}) \\ &+ \sum_{k_1+k_2+\ldots+k_n=m \atop {\exists\ k_j \ge 2}} \binom{m}{k_1,k_2,\ldots,k_n} z_{1,n}^{k_1} z_{2,n}^{k_2} \ldots z_{n,n}^{k_n}.
\end{align*}
Note that the sum $s$ of multinomial coefficients in the latter summation can be found by setting $z_{j,n}=1,\ j=1,\ldots,n,$ in the above formula, which gives
\[
n^m= m! \binom{n}{m} + s = \prod_{j=0}^{m-1} (n-j) +s.
\]
It follows that $s=O\left(n^{m-1}\right)$ as $n\to\infty$, and that
\[
\left|\left(\sum_{j=1}^n z_{j,n}\right)^m - m!\,\sigma_m(z_{1,n}, \ldots, z_{n,n})\right| \le R^{m}\, O\left(n^{m-1}\right) \quad\mbox{as } n\to\infty.
\]
Since $\lim_{n\to\infty} m!\,\binom{n}{m}/n^m = 1$, we can divide the above equation by $n^m$ and let $n\to\infty$ to obtain the first equality in \eqref{2.7}, provided one of the limits in \eqref{2.7} exists. But the fact
\[
\lim_{{\mathcal N}\ni n\to\infty} S_1(z_{1,n},\ldots,z_{n,n}) = \int z\,d\tau(z)
\]
is an easy consequence of the weak* convergence $\tau_n \stackrel{*}{\rightarrow} \tau$ for $n\in{\mathcal N}.$

\end{proof}

\begin{proof}[Proof of Theorem \ref{thm2.5}]

Consider the half-plane $H_a=\{z\in\C: \Re\,z \le a\}.$ Given a fixed $a>0$, we arrange each set $\{z_{k,n}\}_{k=1}^n$ in the order of increasing real parts:
\[
\Re\,z_{1,n} \le \Re\,z_{2,n} \le \ldots \le \Re\,z_{l_n,n} \le a < \Re\,z_{l_n+1,n} \le \ldots \le \Re\,z_{n,n}.
\]
Define $\hat\tau_n:=\tau_n\vert_{H_a}.$ It follows that
\[
\hat\tau_n \stackrel{*}{\rightarrow} \tau_a:=\tau\vert_{H_a},\ n\in{\mathcal N}, \quad \mbox{and} \quad \lim_{{\mathcal N}\ni n\to\infty} l_n/n = \tau(H_a)
\]
 for any $a>0$ such that $\tau(\{z:\Re\,z=a\})=0$, by Theorem $0.5^\prime$ of \cite{La} and Theorem 2.1 of \cite{Bi}. Hence the above equation holds for all but countably many $a>0$, and we consider only such $a$ below.

Observe that $S_m(z_{1,n}, z_{2,n}, \ldots, z_{n,n})\ge 0$ and $S_m(z_{1,n}, z_{2,n}, \ldots, z_{l_n,n})\ge 0$ for $\{z_{k,n}\}_{k=1}^n\subset V_m$ because all terms in these sums have positive real parts, and their imaginary parts cancel due to symmetry of $\{z_{k,n}\}_{k=1}^n$. For the same reason, we have that $\sigma_m(z_{1,n}, \ldots, z_{n,n}) \ge \sigma_m(z_{1,n}, \ldots, z_{l_n,n}),$ so that
\[
S_m(z_{1,n}, \ldots, z_{n,n}) \ge  S_m(z_{1,n}, \ldots, z_{l_n,n}) \binom{l_n}{m}/\binom{n}{m}.
\]
Since
\[
\lim_{n\to\infty} \binom{l_n}{m}/\binom{n}{m} = \lim_{n\to\infty} \left(l_n/n\right)^m = \left(\tau(H_a)\right)^m,
\]
we obtain by the weak* convergence $\hat\tau_n \stackrel{*}{\rightarrow} \tau_a,\ n\in{\mathcal N},$ and Theorem \ref{thm2.4} that
\begin{align*}
\liminf_{{\mathcal N}\ni n\to\infty} S_m(z_{1,n}, \ldots, z_{n,n}) &\ge \left(\tau(H_a)\right)^m  \liminf_{{\mathcal N}\ni n\to\infty} S_m(z_{1,n}, \ldots, z_{l_n,n}) \\ &= \left(\tau(H_a)\right)^m \left(\int z\,d\tau_a(z)\right)^m.
\end{align*}
The concluding step is to let $a\to+\infty$ in the above inequality, using
\[
\lim_{a\to+\infty} \tau(H_a) = \tau(\C) = 1 \quad \mbox{and} \quad \lim_{a\to+\infty} \int z\,d\tau_a(z) = \int z\,d\tau(z).
\]

\end{proof}

\begin{proof}[Proof of Theorem \ref{thm2.6}]

Equation \eqref{2.9} implies that $\tau_n \stackrel{*}{\rightarrow} \mu_E$ by Theorem 2.1 of \cite{PrCrelle}, where $\mu_E$ is the equilibrium measure of $E$. Hence we obtain that
\[
\liminf_{{\mathcal N}\ni n\to\infty} S_m(\alpha_{1,n}, \ldots, \alpha_{n,n}) \ge \left(\int z\,d\mu_E(z)\right)^m,\quad m\in\N,
\]
by Theorem \ref{thm2.5}. Lower bounds for the moments of equilibrium measures were established in \cite{BLP}. We apply the change of variable $x=t^2$, and define the compact set $K=\{t\in\R: t^2\in E\}$. Then $K$ is symmetric about the origin, so that $\int t\, d\mu_K(t)=0$. Furthermore, $d\mu_K(t) = d\mu_E(t^2),\ t\in K$, and cap$(K)=1$; see \cite[p. 134]{Ra}. It now follows from Theorem 1 of \cite{BLP} that
\[
\int x\, d\mu_E(x) = \int t^2\, d\mu_K(t) \ge \int t^2\, d\mu_{[-2,2]}(t) = \int_{-2}^2
\frac{t^2\,dt}{\pi\sqrt{4-t^2}} = 2,
\]
so that \eqref{2.10} is proved.

We discussed in Section 1 that the Chebyshev polynomials $t_n(x):=2\cos(n\arccos((x-2)/2))$ for $[0,4]$ satisfy the assumptions of this theorem. Moreover, the counting measures $\tau_n \stackrel{*}{\rightarrow} \mu_{[0,4]} = dx/(\pi\sqrt{x(4-x)})$ as $n\to\infty$, which gives
\[
\lim_{n\to\infty} S_1(\alpha_{1,n}, \ldots, \alpha_{n,n}) = \int_0^4 \frac{x\,dx}{\pi\sqrt{x(4-x)}} = 2.
\]
Combining this with \eqref{2.1} and \eqref{2.10}, we immediately see that equality holds in \eqref{2.10} for all $m\in\N.$

\end{proof}

\begin{proof}[Proof of Theorem \ref{thm3.2}]

Using convexity of $\phi$, we have that
\begin{align*}
\limsup_{{\mathcal N}\ni n\to\infty} \phi\left(\frac{1}{n} \sum_{k=1}^n |\alpha_{k,n}|\right) \le \limsup_{{\mathcal N}\ni n\to\infty} C_n^{\phi}(|\alpha_{1,n}|,\ldots,|\alpha_{n,n}|) < \infty.
\end{align*}
Since $\phi$ is also strictly increasing, we conclude that $\lim_{x\to+\infty} \phi(x) = +\infty.$ Hence we obtain by the above inequality that
\[
\limsup_{{\mathcal N}\ni n\to\infty} \frac{1}{n} \sum_{k=1}^n |\alpha_{k,n}| < \infty.
\]
Thus all assumptions of Corollary \ref{cor2.2} are satisfied with $m=1$, and the desired conclusion follows.

\end{proof}

\begin{proof}[Proof of Theorem \ref{thm4.1}]

Since \eqref{2.6} is satisfied, Corollary \ref{cor2.2} gives that any weak* limit of the counting measures $\tau_n$ for our sequence must be a unit measure with finite logarithmic energy.
The inequality $\gamma\le \pi/(2m)$ insures that $W_{\gamma}\subset V_m$ for each such $m\in\N.$ Hence all conditions of Theorem \ref{thm2.5} are satisfied, and \eqref{2.8} gives us \eqref{4.1}, where $\tau$ is a weak* limit of $\tau_n$ along a subsequence that attains the value of $\liminf_{{\mathcal N}\ni n\to\infty} S_m(\alpha_{1,n}, \ldots, \alpha_{n,n}).$ Note that $\tau$ is supported in $W_{\gamma}$, and is different from the point mass $\delta_0$ at the origin, because $\tau$ has finite energy. Hence $\int z\,d\tau(z) > 0.$

\end{proof}

\subsection*{Acknowledgements}
This research was partially supported by the National Security Agency (grant H98230-12-1-0227), and by the AT\&T Professorship.

\normalsize
\baselineskip=17pt

\end{document}